\documentclass[11pt]{article}
\usepackage[dvips]{graphicx}
\usepackage{color}
\usepackage{amsmath}
\usepackage{amsfonts}
\usepackage{amssymb}
\usepackage{amsthm}
\usepackage{newlfont}
\usepackage{epsfig}
\usepackage{multirow}
\usepackage{setspace}
\usepackage{hyperref}
\usepackage{cite}

\begin{document}

\newtheorem{assumption}{Assumption}[section]
\newtheorem{definition}{Definition}[section]
\newtheorem{lemma}{Lemma}[section]
\newtheorem{proposition}{Proposition}[section]
\newtheorem{theorem}{Theorem}[section]
\newtheorem{corollary}{Corollary}[section]
\newtheorem{remark}{Remark}[section]

\title{Linear Conjugacy of Chemical Reaction Networks}
\author{Matthew D. Johnston\thanks{Supported by a Natural Sciences and Engineering Research Council of Canada Post-Graduate Scholarship} \vspace*{.2in} and David Siegel\thanks{Supported by a Natural Sciences and Engineering Research Council of Canada Discovery Grant \newline \textbf{Keywords:} chemical kinetics; stability theory; persistence; complex balancing; dynamical equivalence \newline \textbf{AMS Subject Classifications:} 80A30, 34D20, 37C75.} \\
Department of Applied Mathematics, University of Waterloo, \\
Waterloo, Ontario, Canada N2L 3G1}
\date{}
\maketitle

\tableofcontents

\bigskip

\begin{abstract}

Under suitable assumptions, the dynamic behaviour of a chemical reaction network is governed by an autonomous set of polynomial ordinary differential equations over continuous variables representing the concentrations of the reactant species. It is known that two networks may possess the same governing mass-action dynamics despite disparate network structure. To date, however, there has only been limited work exploiting this phenomenon even for the cases where one network possesses known dynamics while the other does not. In this paper, we bring these known results into a broader unified theory which we call conjugate chemical reaction network theory. We present a theorem which gives conditions under which two networks with different governing mass-action dynamics may exhibit the same qualitative dynamics and use it to extend the scope of the well-known theory of weakly reversible systems.

\end{abstract}

\bigskip

\section{Introduction}

A chemical reaction network is given by sets of reactant species reacting at prescribed rates to form sets of product species. Under appropriate assumptions (well-mixing, constant external conditions, large number of reacting molecules, mass-action kinetics, etc.) such systems can be modeled deterministically by an autonomous set of ordinary differential equations over continuous variables representing the concentrations of the reactant species. The resulting mathematical models have a long history and enjoy applications in fields such as systems biology, industrial chemistry, pharmaceutics, etc. \cite{B-B1,F2,P-S,S}. Active theoretical topics within the mathematical literature include determining network conditions which guarantee locally stable dynamics \cite{F1,H,H-J1,S-C,S-M,V-H}, conditions which prohibit/permit multistability \cite{C-F1,C-F2,C-P2,S-F}, and conditions which ensure the persistence of the chemical species \cite{A,A-S,C-D-S-S,C-N-P,S}.

One topic which has gained only fleeting attention until recently has been determining conditions under which networks with different reaction structure exhibit the same qualitative dynamics. The most comprehensive of these results is Craciun and Pantea \cite{C-P} wherein the authors consider conditions under which a mass-action system does not permit uniquely identifiable rate constants in the sense that two disparate reaction networks could produce the same governing set of differential equations under the assumption of mass-action kinetics for specified choices of the rate constants. Within the literature, this is sometimes referred to as the ``fundamental dogma of chemical kinetics'' \cite{C-P-R,Sz2,Sz-H}. This work has recently been continued by G. Szederk\'{e}nyi \emph{et al.} who have presented computational methods for determining networks which have equivalent dynamics \cite{Sz2,Sz-H,Sz-H-P}. Related work in the literature can be found in \cite{A1,C-P-R,H-J1,K,M}.

In this paper, we attempt to bring these results together into a unified framework and language. We have chosen to borrow from dynamical systems theory in calling two mass-action systems with related dynamics \emph{conjugate} systems \cite{P,W}. More specifically, we will call two mass-action systems conjugate if there is a mapping which takes trajectories of one system into trajectories of the other. We will focus on the subset of \emph{linearly conjugate} systems. To the best of our knowledge, all of the results contained in the literature to date on qualitative equivalence of mass-action systems restrict their focus to conditions under which the governing differential equations for two networks are identical; consequently, the required conjugacy mapping is the identity. We will go further than these results with Theorem \ref{maintheorem} in giving conditions on the network for which a non-trivial linear mapping is required to demonstrate conjugacy.

In practice, we are typically presented with a specified network and asked to determine the dynamic behaviour. Linear conjugate chemical reaction network theory presents a powerful approach to solving such a problem: if a network with unknown dynamics can be shown to be linearly conjugate to a network with known dynamics, the dynamical properties of the second apply to the first as well. While the second network is generally unknown, the apparent dynamics of the first network may provide clues as to the class of networks within which to search for a linearly conjugate network. We focus particularly on the class of weakly reversible networks considered in \cite{F1,H,H-J1}. Szederk\'{e}nyi and Hangos have considered some aspects of this problem from a computational perspective \cite{Sz-H}.


The paper is organized as follows: in Section \ref{backgroundsection} we introduce the terminology and definitions which will be used throughout the paper; in Section \ref{conjugatedynamicalsystemssection} we introduce our notion of conjugate chemical reaction networks, present the relevant results from the literature, give a new result (Theorem \ref{maintheorem}) which goes beyond this known theory, and connect our results to weakly reversible networks through examples; we close in Section \ref{conclusionsection} with some reflections on the results obtained and avenues for future work.

Throughout the paper, we will let $\mathbb{R}_{>0}^m$ and $\mathbb{R}_{\geq 0}^m$ denote the $m$-dimensional spaces with all coordinates strictly positive and non-negative, respectively.

\section{Background}
\label{backgroundsection}

In this section, we introduce the concepts and notation relevant to the study of chemical reaction networks. We also summarize the work conducted to date on the subject of conjugacy of chemical reaction networks.

\subsection{Chemical Reaction Networks}

We will let $\mathcal{A}_j$ denote the \emph{species} or \emph{reactants} of the network and define $|\mathcal{S}| = m$ where $\mathcal{S}$ is the set of distinct species of the network. A chemical reaction network consists of the union of the elementary reaction set
\begin{equation}
\label{crn}
\mathcal{R}_i: \hspace{0.5in} \sum_{j=1}^m z_{ij} \mathcal{A}_j \; \longrightarrow \; \sum_{j=1}^m z_{ij}' \mathcal{A}_j, \hspace{0.5in} i=1, \ldots, r
\end{equation}
where $z_{ij}, z_{ij}' \in \mathbb{Z}_{\geq 0}$ are the \emph{stoichiometric coefficients} of the $i^{th}$ reaction. The set of all reactions in the network will be denoted $\mathcal{R}$ so that $|\mathcal{R}|=r$. It will occasionally be convenient to adopt the short-hand $\mathcal{C}_i = \sum_{j=1}^m z_{ij} \mathcal{A}_j$ and $\mathcal{C}_i' = \sum_{j=1}^m z_{ij}' \mathcal{A}_j$ so that the reaction network can be given by
\begin{equation}
\label{crn2}
\mathcal{R}_i: \hspace{0.5in} \mathcal{C}_i \; \longrightarrow \; \mathcal{C}_i' \hspace{0.5in} i=1, \ldots, r.
\end{equation}
In this setting, $\mathcal{C}_i$ will be referred to as the \emph{reactant complex} and $\mathcal{C}_i'$ will be referred to as the \emph{product complex}. A chemical reaction network will be denoted by the triplet $\mathcal{N} = (\mathcal{S},\mathcal{C},\mathcal{R})$.

There is no demand that the stoichiometry of each reactant and product complex be distinct except that $\mathcal{C}_i \not= \mathcal{C}_i'$ for $i=1, \ldots, r$ (so each reaction is meaningful). That is to say, a single stoichiometrically distinct complex may be the reactant or product complex for numerous reactions. It will occasionally be convenient to index the complexes not according to the reactions they occur in, but by their stoichiometric distinctiveness. In this setting, reactions are not represented as a list but as interactions between the $n$ stoichiometrically distinct complexes of the network. We will denote by $\mathcal{C}^{i}$, $i = 1, \ldots, n$, the $n$ stoichiometrically distinct complexes of the network and let $\mathcal{C}$ denote the set of such complexes.

We are often interested in the algebraic structure of a reaction network. These properties will be most readily seen by indexing the complexes by their stoichiometric distinctiveness according to $\mathcal{C}^i$, $i = 1, \ldots, n$. The following concepts will be of interest to us.


\begin{definition}
\label{path}
Consider a chemical reaction network $\mathcal{N}$. We will say there is a \textbf{path} from $\mathcal{C}^i \in \mathcal{C}$ to $\mathcal{C}^j \in \mathcal{C}$ if there exists a sequence of stoichiometrically distinct complexes $\left\{ \mathcal{C}^{\nu_1} \right.$, $\mathcal{C}^{\nu_2},$ $\ldots,$ $\left. \mathcal{C}^{\nu_l} \right\},$ such that $\mathcal{C}^{i} = \mathcal{C}^{\nu_1}$, $\mathcal{C}^{j}=\mathcal{C}^{\nu_l}$, and
\[\mathcal{C}^{\nu_k} \longrightarrow \mathcal{C}^{\nu_{k+1}} \in \mathcal{R}\]
for every $k=1, \ldots, l-1$.
\end{definition}




\begin{definition}
\label{weaklyreversible}
A chemical reaction network $\mathcal{N}$ is said to be \textbf{weakly reversible} if the existence of a path from $\mathcal{C}^{i}$ to $\mathcal{C}^{j}$ implies the existence of a path from $\mathcal{C}^{j}$ to $\mathcal{C}^{i}$.
\end{definition}

Intuitively, weak reversibility means every reaction can be reversed by a suitable sequence of subsequent reactions. It is not necessary that every reaction have a reverse reaction (i.e. that the system be reversible), as can be seen by the weakly reversible network
\[\begin{array}{c} \mathcal{C}^1 \; \longrightarrow \; \mathcal{C}^2 \\ \nwarrow \hspace{0.2in} \swarrow \\ \mathcal{C}^3.\end{array}\]

\subsection{Mass-Action Kinetics}

We are particularly interested in the evolution of the concentrations of the chemical species as time progresses. We will let $x_i = [ \mathcal{A}_i ]$ denote the concentration of the $i^{th}$ species and denote by $\mathbf{x} = [x_1 \; x_2 \; \cdots \; x_m]^T \in \mathbb{R}_{\geq 0}^m$ the \emph{concentration vector}.

The differential equations governing the chemical reaction network $\mathcal{N}$ given by (\ref{crn}) under the assumption of mass-action kinetics are given by
\begin{equation}
\label{de}
\frac{d\mathbf{x}}{dt} = \mathbf{f}(\mathbf{x}) = \sum_{i = 1}^r k_i \: ( \mathbf{z}_i' - \mathbf{z}_i ) \: \mathbf{x}^{\mathbf{z}_i}
\end{equation} 
\noindent where $\mathbf{x}^{\mathbf{z}_{i}} = \prod_{j=1}^m x_j^{z_{ij}}$ and $k_i>0$ is the \emph{rate constant} for the $i^{th}$ reaction. We will denote by $\mathbf{k} = [ k_1 \; k_2 \; \cdots \; k_r]^T \in \mathbb{R}_{> 0}^r$ the \emph{rate constant vector}.

Chemical reaction networks endowed with mass-action kinetics (\ref{de}) will be called \emph{mass-action systems} and will be denoted by $(\mathcal{S},\mathcal{C},\mathcal{R},\mathbf{k})$. Since mass-action kinetics is the only form of kinetics considered in this paper, we will use $\mathcal{N}$ interchangeably to denote both the reaction network $(\mathcal{S},\mathcal{C},\mathcal{R})$ and the mass-action system $(\mathcal{S},\mathcal{C},\mathcal{R},\mathbf{k})$.

Several fundamental properties of mass-action systems are readily seen from (\ref{de}). In particular, it is clear that solutions are not able to wander around freely in $\mathbb{R}_{>0}^m$. The following concepts help clarify where solutions may lie.

\begin{definition}
\label{kinetic}
The \textbf{kinetic subspace} for a mass-action system is the smallest linear subspace $S^* \subset \mathbb{R}^m$ such that
\[S^* = \mbox{im}\left(\mathbf{f}(\mathbf{x})\right).\]
\noindent The dimension of the kinetic subspace will be denoted by $| S^* | = s^*$.
\end{definition}

\begin{definition}
\label{stoic}
The \textbf{stoichiometric subspace} for a mass-action system is the linear subspace $S \subset \mathbb{R}^m$ such that
\[S= \mbox{span} \left\{ \left. (\mathbf{z}_i'-\mathbf{z}_i) \; \right| \; i = 1, \ldots, r \right\}.\]
\noindent The dimension of the stoichiometric subspace will be denoted by $| S | = s$.
\end{definition}

While the stoichiometric and kinetic subspaces are related, they need not coincide, in which case we have the strict inclusion $S^* \subset S$ (see Example 2 in Section \ref{weaklyreversiblenetworkssection}). Since we clearly have $\mathbf{f}(\mathbf{x}) \in S$ and $\mathbf{f}(\mathbf{x}) \in S^*$, by integrating (\ref{de}) we can see that solutions can be partitioned according to their initial conditions $\mathbf{x}_0 \in \mathbb{R}_{>0}^m$ in the following way.

\begin{proposition}[\cite{H-J1,V-H}]
\label{proposition2}
Let $\mathbf{x}(t)$ be the solution to (\ref{de}) with $\mathbf{x}(0)=\mathbf{x}_0 \in \mathbb{R}^m_{>0}$. Then $\mathbf{x}(t) \in \mathsf{C}^*_{\mathbf{x}_0} = (\mathbf{x}_0 + S^*) \cap \mathbb{R}^m_{>0}$ for $t \geq 0$ where $\mathsf{C}^*_{\mathbf{x}_0}$ is called the positive \textbf{kinetic compatibility class} associated with $\mathbf{x}_0 \in \mathbb{R}^m_{>0}$.
\end{proposition}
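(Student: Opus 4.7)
The statement has two independent components: (i) the trajectory stays in the affine subspace $\mathbf{x}_0 + S^*$, and (ii) the trajectory stays in the open positive orthant $\mathbb{R}^m_{>0}$. I would treat them in turn.

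For (i), the plan is to integrate the defining ODE (\ref{de}) directly. The fundamental theorem of calculus gives $\mathbf{x}(t) - \mathbf{x}_0 = \int_0^t \mathbf{f}(\mathbf{x}(s))\,ds$, and by Definition \ref{kinetic} each integrand $\mathbf{f}(\mathbf{x}(s))$ lies in $S^*$. Since $S^*$ is a finite-dimensional linear subspace of $\mathbb{R}^m$ (hence closed under componentwise Riemann integration), the displacement $\mathbf{x}(t) - \mathbf{x}_0$ itself lies in $S^*$, which is the desired affine containment.

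The substance of the argument lies in (ii), positive invariance of $\mathbb{R}^m_{>0}$. The key observation I would exploit is a structural decomposition of each component $f_j$ of the mass-action vector field: splitting the sum in (\ref{de}) according to whether the reactant stoichiometric coefficient $z_{ij}$ is zero or at least one, one obtains $f_j(\mathbf{x}) = p_j(\mathbf{x}) + x_j\, q_j(\mathbf{x})$ where $p_j(\mathbf{x}) = \sum_{i : z_{ij} = 0} k_i\, z_{ij}'\, \mathbf{x}^{\mathbf{z}_i}$ is manifestly non-negative on $\mathbb{R}^m_{\geq 0}$ (since $z_{ij}' \in \mathbb{Z}_{\geq 0}$ and $\mathbf{x}^{\mathbf{z}_i}$ does not involve $x_j$), while every remaining term contributes a monomial divisible by $x_j$ because $z_{ij} \geq 1$ forces $x_j$ to appear in $\mathbf{x}^{\mathbf{z}_i}$. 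Along a trajectory this yields the scalar inequality $\dot{x}_j(t) \geq x_j(t)\, q_j(\mathbf{x}(t))$, and a Gronwall comparison delivers $x_j(t) \geq x_j(0)\exp\!\bigl(\int_0^t q_j(\mathbf{x}(s))\,ds\bigr) > 0$, so no coordinate can reach zero in forward time.

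The main obstacle is the positivity argument, since it relies on the specific combinatorial form of mass-action kinetics and not on any generic property of polynomial ODEs; some care is also needed with the Gronwall step, restricting first to the open set on which $x_j(t) > 0$ (to legitimately divide by $x_j$) and then extending positivity to the closure of this set by continuity through the standard bootstrapping argument $T^{\star} := \sup\{t : x_j > 0 \text{ on } [0,t]\}$, showing $T^\star$ coincides with the right endpoint of the interval of existence. Combining (i) and (ii) gives $\mathbf{x}(t) \in (\mathbf{x}_0 + S^*) \cap \mathbb{R}^m_{>0} = \mathsf{C}^*_{\mathbf{x}_0}$ as claimed.
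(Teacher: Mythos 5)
Your proof is correct, but it does substantially more than the paper does: the paper offers no proof of Proposition \ref{proposition2} at all, merely the one-line remark preceding it that $\mathbf{f}(\mathbf{x}) \in S^*$ so the containment in $\mathbf{x}_0 + S^*$ follows ``by integrating (\ref{de})'' --- which is exactly your part (i) --- and defers everything else to the citations \cite{H-J1,V-H}. Your part (ii) is the standard quasi-positivity argument for mass-action systems, and the decomposition $f_j(\mathbf{x}) = p_j(\mathbf{x}) + x_j q_j(\mathbf{x})$ with $p_j \geq 0$ on the nonnegative orthant is the right structural fact to isolate; together with the integrating-factor bound $x_j(t) \geq x_j(0)\exp\bigl(\int_0^t q_j(\mathbf{x}(s))\,ds\bigr)$ it correctly rules out a coordinate reaching zero in finite time. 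One refinement you should make explicit: the inequality $p_j(\mathbf{x}(t)) \geq 0$ requires \emph{all} coordinates of $\mathbf{x}(t)$ to be nonnegative, not just $x_j$, since $p_j$ is a polynomial in the other variables as well; so the bootstrap time should be defined jointly, $T^\star := \sup\{t : x_k(s) > 0 \text{ for all } k \text{ and all } s \in [0,t]\}$, with the Gronwall bound applied to every coordinate simultaneously on $[0,T^\star)$ to show $T^\star$ is the right endpoint of the maximal interval of existence. With that adjustment the argument is complete and is a legitimate self-contained proof of a statement the paper treats as known.
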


In other words, the sets $\mathsf{C}^*_{\mathbf{x}_0}$ are invariant spaces of the dynamics (\ref{de}). A more commonly encountered invariant space of (\ref{de}) is the positive \emph{stoichiometic compatibility class} $\mathsf{C}_{\mathbf{x}_0} = (\mathbf{x}_0 + S) \cap \mathbb{R}^m_{>0}$. The sets $\mathsf{C}_{\mathbf{x}_0}$ and $\mathsf{C}^*_{\mathbf{x}_0}$ are related by the following result, which allows us to restrict attention to $\mathsf{C}_{\mathbf{x}_0}$ for weakly reversible networks.

\begin{lemma}[Corollary 1, \cite{F-H}]
\label{kineticstoichiometric}
For weakly reversible mass-action systems the stoichiometric and kinetic subspaces coincide.
\end{lemma}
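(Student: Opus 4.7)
The plan is to pass to the matrix--vector form of the mass-action dynamics. Write $\mathbf{f}(\mathbf{x}) = Y A_k \Psi(\mathbf{x})$, where $Y \in \mathbb{R}^{m \times n}$ is the matrix whose columns are the $n$ stoichiometrically distinct complex vectors $\mathbf{z}^{(i)}$, $\Psi(\mathbf{x}) \in \mathbb{R}^n$ is the monomial vector with entries $\Psi_i(\mathbf{x}) = \mathbf{x}^{\mathbf{z}^{(i)}}$, and $A_k \in \mathbb{R}^{n \times n}$ is the weighted Laplacian of the directed reaction graph on the complexes (off-diagonal entry in row $j$, column $i$ equal to the rate constant of $\mathcal{C}^i \to \mathcal{C}^j$ whenever that reaction lies in $\mathcal{R}$, with each column of $A_k$ summing to zero). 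The inclusion $S^* \subseteq S$ is immediate from (\ref{de}), since every column of $YA_k$ is a linear combination of reaction vectors $\mathbf{z}^{(j)} - \mathbf{z}^{(i)}$.

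For the reverse inclusion I would first argue that $S^* = \mbox{im}(YA_k)$. Because the complex vectors $\mathbf{z}^{(i)}$ are pairwise distinct, the monomials $\mathbf{x}^{\mathbf{z}^{(i)}}$ are linearly independent as functions on $\mathbb{R}_{>0}^m$, and hence the set $\{\Psi(\mathbf{x}) : \mathbf{x} \in \mathbb{R}_{>0}^m\}$ spans all of $\mathbb{R}^n$. Since $S^*$ is by definition the span of the image of $\mathbf{f}$, this gives $S^* = YA_k \mathbb{R}^n = \mbox{im}(YA_k)$.

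It therefore suffices to show $\mbox{im}(YA_k) = S$. Let $W = \mbox{span}\{e_j - e_i : \mathcal{C}^i \to \mathcal{C}^j \in \mathcal{R}\} \subseteq \mathbb{R}^n$, so that $S = YW$. Inspecting the columns of $A_k$ shows $\mbox{im}(A_k) \subseteq W$, and both $\mbox{im}(A_k)$ and $W$ lie in the subspace $V = \{v \in \mathbb{R}^n : \sum_{i \in L} v_i = 0 \text{ for every linkage class } L\}$, whose codimension equals the number $\ell$ of linkage classes. The key input is the rank computation $\mbox{rank}(A_k) = n - \ell$: weak reversibility guarantees that each linkage class is strongly connected, and I would invoke the matrix--tree theorem (or equivalently a Perron--Frobenius argument for the column-stochastic rescaling of $A_k$) to conclude that the restriction of $A_k$ to each linkage class has a one-dimensional kernel, hence rank $|L|-1$. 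Summing over linkage classes yields $\mbox{im}(A_k) = V$, and an analogous dimension count for the graph-theoretic span $W$ gives $W = V$ as well. Applying $Y$ then produces $S^* = \mbox{im}(YA_k) = YW = S$.

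The principal obstacle is the rank computation for $A_k$: this is the only step where weak reversibility enters essentially, since without it the Laplacian may be rank-deficient relative to $V$, leaving a genuine gap $S^* \subsetneq S$. Everything else is either linear-algebraic bookkeeping or the elementary linear independence of distinct monomials.
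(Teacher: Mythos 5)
The paper does not prove this lemma at all; it is quoted verbatim as Corollary 1 of Feinberg and Horn \cite{F-H}, so there is no internal proof to compare against. Your argument is correct and is essentially the classical one underlying that reference: the factorization $\mathbf{f}(\mathbf{x}) = YA_k\Psi(\mathbf{x})$, linear independence of distinct monomials on $\mathbb{R}_{>0}^m$ to identify $S^*$ with $\mbox{im}(YA_k)$, and the rank computation $\mbox{rank}(A_k) = n-\ell$ for the weighted Laplacian of a digraph whose linkage classes are strongly connected, which is precisely the (only) point where weak reversibility enters.
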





\subsection{Complex Balanced Systems}
\label{detailedandcomplexbalancedsystemssection}

Many mass-action systems can be classified according to the nature of the equilibrium concentrations they permit. A particularly important class of such systems, which will be considered extensively in this paper, are \emph{complex balanced} systems (see \cite{C-D-S-S,H-J1,V-H}).


\begin{definition}
\label{complexbalanced}
An equilibrium concentration $\mathbf{x}^* \in \mathbb{R}_{>0}^m$ of a mass-action system (\ref{de}) will be called \textbf{complex balanced} if, for every stoichiometrically distinct complex $\mathcal{C}^0 \in \mathcal{C}$, we have
\[\mathop{\sum_{j=1}^r}_{\mathcal{C}_j'=\mathcal{C}^0} k_j \: (\mathbf{x}^*)^{\mathbf{z}_j} = (\mathbf{x}^*)^{\mathbf{z}_i} \mathop{\sum_{j=1}^r}_{\mathcal{C}_j=\mathcal{C}^0} k_j.\]
A mass-action system will be called a \textbf{complex balanced system} if every equilibrium concentration is a complex balanced equilibrium concentration.
\end{definition}
In other words, an equilibrium concentration is complex balanced if the net flow rate into every stoichiometrically distinct complex is exactly balanced by the net flow rate out of the complex. It is known that every mass-action system permitting a complex balanced equilibrium concentration corresponds to a weakly reversible reaction network (Theorem 2B, \cite{H}).

Analysis of complex balanced systems is made easier by the following lemma.


\begin{lemma}[Lemma 5B,\cite{H-J1}]
\label{lemma1}
If a mass-action system is complex balanced at some concentration $\mathbf{x}^* \in \mathbb{R}_{>0}^m$, then it is complex balanced at all equilibrium concentrations.
\end{lemma}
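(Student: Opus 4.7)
The plan is to build the classical Horn--Jackson pseudo-Helmholtz Lyapunov function anchored at the given complex-balanced equilibrium $\mathbf{x}^* \in \mathbb{R}_{>0}^m$, namely
\[ V(\mathbf{x}) = \sum_{i=1}^m \left[ x_i \ln \frac{x_i}{x_i^*} - x_i + x_i^* \right], \]
and to show that $\dot V \leq 0$ along every trajectory of (\ref{de}), with equality forcing the complex-balance condition. Any other positive equilibrium $\mathbf{y}^*$ is automatically a zero of $\dot V$, so this forces $\mathbf{y}^*$ to be complex balanced. Writing $\mu_i := \ln(x_i/x_i^*)$, direct differentiation gives
\[ \dot V = \sum_{j=1}^r k_j \mathbf{x}^{\mathbf{z}_j} \bigl( \boldsymbol\mu \cdot \mathbf{z}_j' - \boldsymbol\mu \cdot \mathbf{z}_j \bigr), \]
and the substitution $\mathbf{x}^{\mathbf{z}_j} = (\mathbf{x}^*)^{\mathbf{z}_j} e^{\boldsymbol\mu \cdot \mathbf{z}_j}$ together with the convexity inequality $e^a(b-a) \leq e^b - e^a$ (strict whenever $a \neq b$) applied reaction by reaction yields
\[ \dot V \leq \sum_{j=1}^r k_j (\mathbf{x}^*)^{\mathbf{z}_j} \bigl( e^{\boldsymbol\mu \cdot \mathbf{z}_j'} - e^{\boldsymbol\mu \cdot \mathbf{z}_j} \bigr). \]

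Next I would regroup the right-hand side by stoichiometrically distinct complexes, indexing the first term by the product complex $\mathcal{C}_j'$ and the second by the reactant complex $\mathcal{C}_j$; every reaction incident to a complex $\mathcal{C}^0$ contributes the same exponential factor $e^{\boldsymbol\mu \cdot \mathbf{z}_{\mathcal{C}^0}}$, giving
\[ \dot V \leq \sum_{\mathcal{C}^0 \in \mathcal{C}} e^{\boldsymbol\mu \cdot \mathbf{z}_{\mathcal{C}^0}} \left[ \mathop{\sum_{j=1}^r}_{\mathcal{C}_j' = \mathcal{C}^0} k_j (\mathbf{x}^*)^{\mathbf{z}_j} - (\mathbf{x}^*)^{\mathbf{z}_{\mathcal{C}^0}} \mathop{\sum_{j=1}^r}_{\mathcal{C}_j = \mathcal{C}^0} k_j \right]. \]
Each bracket is precisely the complex-balance defect of $\mathbf{x}^*$ at $\mathcal{C}^0$ and therefore vanishes by Definition \ref{complexbalanced}. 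Consequently $\dot V \leq 0$ throughout $\mathbb{R}_{>0}^m$.

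Finally, let $\mathbf{y}^* \in \mathbb{R}_{>0}^m$ be any positive equilibrium, so $\dot V(\mathbf{y}^*) = 0$. Since the cancellation above is algebraic and independent of the evaluation point, the bound $\dot V \leq 0$ can vanish only when the reaction-by-reaction convexity inequality is an equality for every $j$, forcing $\boldsymbol\mu \cdot \mathbf{z}_j = \boldsymbol\mu \cdot \mathbf{z}_j'$ at $\mathbf{y}^*$. Equivalently, the ratio $\alpha_{\mathcal{C}^0} := \mathbf{y}^{*\,\mathbf{z}_{\mathcal{C}^0}}/(\mathbf{x}^*)^{\mathbf{z}_{\mathcal{C}^0}}$ is preserved across every reaction: whenever $\mathcal{C}_j \to \mathcal{C}^0$ lies in $\mathcal{R}$ we have $\alpha_{\mathcal{C}_j} = \alpha_{\mathcal{C}^0}$. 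Substituting $\mathbf{y}^{*\,\mathbf{z}_j} = \alpha_{\mathcal{C}_j}(\mathbf{x}^*)^{\mathbf{z}_j} = \alpha_{\mathcal{C}^0}(\mathbf{x}^*)^{\mathbf{z}_j}$ into each side of the complex-balance identity at $\mathbf{y}^*$ for $\mathcal{C}^0$ lets the common factor $\alpha_{\mathcal{C}^0}$ cancel, reducing the desired equation to complex balance of $\mathbf{x}^*$ at $\mathcal{C}^0$, which holds by hypothesis. I expect the main obstacle to be the index-swapping bookkeeping in step two that produces the clean cancellation against the complex-balance condition at $\mathbf{x}^*$; once that key identity is in hand, the strict-convexity equality condition together with the along-a-reaction propagation of $\alpha_{\mathcal{C}^0}$ closes the argument.
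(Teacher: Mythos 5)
The paper itself gives no proof of this lemma --- it is imported verbatim as Lemma 5B of Horn and Jackson \cite{H-J1} --- so there is no internal argument to compare against; your proposal must be judged on its own, and it is correct. It is in essence the classical Horn--Jackson argument: the convexity bound $e^{a}(b-a)\leq e^{b}-e^{a}$ applied with $a=\boldsymbol{\mu}\cdot\mathbf{z}_j$, $b=\boldsymbol{\mu}\cdot\mathbf{z}_j'$, the regrouping of $\sum_j k_j(\mathbf{x}^*)^{\mathbf{z}_j}\left(e^{\boldsymbol{\mu}\cdot\mathbf{z}_j'}-e^{\boldsymbol{\mu}\cdot\mathbf{z}_j}\right)$ by stoichiometrically distinct complexes so that each bracket is exactly the complex-balance defect of $\mathbf{x}^*$ (and hence vanishes), the strict equality case forcing $\boldsymbol{\mu}\cdot\mathbf{z}_j=\boldsymbol{\mu}\cdot\mathbf{z}_j'$ at any other equilibrium $\mathbf{y}^*$, and the propagation of the ratio $\alpha_{\mathcal{C}^0}$ along reactions to transfer complex balance from $\mathbf{x}^*$ to $\mathbf{y}^*$ are all sound, and the positivity of the weights $k_j(\mathbf{x}^*)^{\mathbf{z}_j}$ makes the ``each term must vanish'' step airtight. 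One small observation: the Lyapunov/trajectory packaging is not actually needed for this lemma. You only ever evaluate $\dot{V}$ at the single point $\mathbf{y}^*$, where it equals $\boldsymbol{\mu}\cdot\mathbf{f}(\mathbf{y}^*)=0$ trivially; the whole argument is the purely algebraic identity obtained by pairing $\mathbf{f}(\mathbf{y}^*)=\mathbf{0}$ with the fixed vector $\boldsymbol{\mu}=\ln\mathbf{y}^*-\ln\mathbf{x}^*$, which is how Horn and Jackson use it (via their quasi-thermostatic characterization of the equilibrium set). Stating it that way would shorten the proof and avoid any appearance of needing global monotonicity of $V$ along solutions, which this lemma does not require.
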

\noindent Consequently, any mass-action system permitting a complex balanced equilibrium concentration is a complex balanced system. It is worth noticing, however, that a mass-action system may be complex balanced for some values of the rate constants and not for others.

\subsection{Locally Stable Dynamics}
\label{locallystabledynamicssection}

Mass-action systems permit a variety of behaviours, including multistability, periodic behaviour, and chaotic behaviour (see \cite{E-T}). In this paper, we will be particularly interested in networks exhibiting \emph{locally stable dynamics}.

\begin{definition}
\label{locallystabledynamics}
A mass-action system is said to exhibit \textbf{locally stable dynamics} if there is a unique positive equilibrium concentration $\mathbf{x}^* \in \mathbb{R}_{>0}^m$ within each kinetic compatibility class $\mathsf{C}^*_{\mathbf{x}_0}$ and that equilibrium concentration is locally asymptotically stable relative to $\mathsf{C}^*_{\mathbf{x}_0}$.
\end{definition}

Locally stable dynamics is a particularly desirable form of dynamics since the behaviour is so regular and predictable; however, it is local in nature and does not preclude trajectories from approaching $\partial \mathsf{C}^*_{\mathbf{x}_0}$. Mass-action systems for which each $\mathbf{x}^*$ is a global attractor for $\mathsf{C}^*_{\mathbf{x}_0}$ are called \emph{globally stable}. The following result was derived by Horn and Jackson in \cite{H-J1}.
\begin{theorem}[Theorem 6A and Lemma 4C, \cite{H-J1}]
\label{hornjackson}
If a mass-action system  is complex balanced then it possesses locally stable dynamics.
\end{theorem}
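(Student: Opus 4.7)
The plan is to adapt the pseudo-Helmholtz free energy as a strict Lyapunov function, following Horn and Jackson. Since the system is complex balanced, the network is weakly reversible (as noted after Definition \ref{complexbalanced}), so Lemma \ref{kineticstoichiometric} gives $S^* = S$, letting us identify kinetic compatibility classes with the usual stoichiometric compatibility classes throughout. Fix any complex balanced equilibrium $\mathbf{x}^* \in \mathbb{R}_{>0}^m$ and consider
\[ V(\mathbf{x}) = \sum_{i=1}^m \bigl[ x_i \ln(x_i/x_i^*) - x_i + x_i^* \bigr] \]
on $\mathbb{R}_{>0}^m$. This function is strictly convex (its Hessian is $\mathrm{diag}(1/x_i)$), nonnegative, and vanishes only at $\mathbf{x}^*$.

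The core computation is $\dot V$ along trajectories. Writing $y_i = \ln(x_i/x_i^*)$ so that $\nabla V(\mathbf{x}) = \mathbf{y}$, and using $\mathbf{x}^{\mathbf{z}_j} = (\mathbf{x}^*)^{\mathbf{z}_j} e^{\mathbf{z}_j \cdot \mathbf{y}}$, equation (\ref{de}) gives
\[ \dot V = \sum_{j=1}^r k_j (\mathbf{x}^*)^{\mathbf{z}_j} e^{\mathbf{z}_j \cdot \mathbf{y}} \bigl( \mathbf{z}_j' \cdot \mathbf{y} - \mathbf{z}_j \cdot \mathbf{y} \bigr). \]
Applying the scalar inequality $e^a(b - a) \le e^b - e^a$ (with equality iff $a = b$) term-by-term and regrouping the resulting sum by stoichiometrically distinct complex, the coefficient of each exponential $e^{\mathbf{z}(\mathcal{C}^0) \cdot \mathbf{y}}$ becomes exactly the net flux into $\mathcal{C}^0$ minus the net flux out, evaluated at $\mathbf{x}^*$, which vanishes by Definition \ref{complexbalanced}. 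Hence $\dot V \le 0$, with equality forcing $(\mathbf{z}_j' - \mathbf{z}_j) \cdot \mathbf{y} = 0$ for every reaction, i.e., $\mathbf{y} \perp S$.

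For existence and uniqueness of the equilibrium in $\mathsf{C}^*_{\mathbf{x}_0}$, I would observe that the restriction of $V$ to the affine manifold $\mathsf{C}^*_{\mathbf{x}_0}$ is strictly convex and proper, hence attains a unique minimizer $\hat{\mathbf{x}}$, characterized by $\mathbf{y}(\hat{\mathbf{x}}) \perp S^* = S$. A direct check using this orthogonality condition, together with complex balancing of $\mathbf{x}^*$, identifies $\hat{\mathbf{x}}$ as a complex balanced equilibrium; conversely, Lemma \ref{lemma1} guarantees any other equilibrium in the class is also complex balanced and hence satisfies the same orthogonality relation, forcing it to coincide with $\hat{\mathbf{x}}$. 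Local asymptotic stability of $\hat{\mathbf{x}}$ relative to $\mathsf{C}^*_{\mathbf{x}_0}$ then follows from the classical Lyapunov theorem applied to $V(\mathbf{x}) - V(\hat{\mathbf{x}})$, which is positive definite about $\hat{\mathbf{x}}$ on the class and has strictly negative derivative on a punctured neighborhood of it.

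The main obstacle I anticipate is the uniqueness step: vanishing of $\dot V$ only yields $\ln(\mathbf{x}/\mathbf{x}^*) \perp S$ rather than $\mathbf{x} = \mathbf{x}^*$ outright, and reconciling this with the compatibility constraint $\mathbf{x} - \mathbf{x}^* \in S^*$ demands the Birch-type observation that $\mathbf{x} \mapsto P_{S^{\perp}}(\ln \mathbf{x})$ is injective on each coset $\mathbf{x}^* + S$. This is exactly what the strict convexity of $V$ restricted to $\mathsf{C}^*_{\mathbf{x}_0}$ encodes, and packaging it carefully — together with the propagation of complex balancing across all equilibria via Lemma \ref{lemma1} — is the subtle ingredient that ties the argument together.
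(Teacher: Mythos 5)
The paper does not actually prove this statement; it is imported verbatim from Horn and Jackson \cite{H-J1} (Theorem 6A for the stability part, Lemma 4C for existence and uniqueness of the equilibrium in each compatibility class), and your proposal reproduces exactly the argument of that source: the pseudo-Helmholtz function $V$, the scalar inequality $e^a(b-a)\le e^b-e^a$, the regrouping of the resulting sum by stoichiometrically distinct complex so that the complex balancing condition of Definition \ref{complexbalanced} annihilates every coefficient, and the identification of $\{\dot V=0\}$ with $\{\mathbf{x}:\ln(\mathbf{x}/\mathbf{x}^*)\perp S\}$. Those steps are all correct, as is your use of Lemma \ref{lemma1} to propagate complex balancing to every equilibrium and of strict convexity of $V$ restricted to a coset of $S$ to force uniqueness of the point satisfying the orthogonality relation.

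The one step you should not wave through is the existence of an \emph{interior} minimizer. $V$ restricted to $\mathsf{C}^*_{\mathbf{x}_0}$ is strictly convex and coercive, but it is not proper on the open class: $x\ln x$ extends continuously and finitely to $x=0$, so the infimum over $(\mathbf{x}_0+S)\cap\mathbb{R}^m_{>0}$ need not be attained there, and a priori the minimizer over the closed polytope $(\mathbf{x}_0+S)\cap\mathbb{R}^m_{\geq 0}$ could lie on the boundary of the orthant, where the stationarity characterization $\nabla V\perp S$ is unavailable. The standard repair --- and the actual content of Horn and Jackson's Lemma 4C --- is to observe that at any point $\mathbf{x}^b$ of the closed polytope with a vanishing coordinate, the one-sided derivative of $V$ along the segment from $\mathbf{x}^b$ toward the interior point $\mathbf{x}_0$ is $-\infty$ (from the logarithms in the coordinates that vanish at $\mathbf{x}^b$ but not at $\mathbf{x}_0$), so no boundary point can be the minimizer. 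With that inserted, your argument is complete and coincides with the proof the paper is citing.
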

\noindent Since complex balanced systems are necessarily weakly reversible, it follows by Lemma \ref{kineticstoichiometric} that we may restrict our attention to the positive stoichiometric compatibility classes $\mathsf{C}_{\mathbf{x}_0}$ in the definition of locally stable dynamics for such systems.

This result is powerful in that it guarantees a particularly desirable form of dynamics based solely on consideration of the equilibrium set of the system. Together with Feinberg, the authors go further in relating the complex balancing condition to the algebraic structure of the reaction network (\ref{crn2}). In particular, they demonstrate that every weakly reversible mass-action system is complex balanced for at least some choice of the rate constants \cite{F1,H}. They also show that there are algebraic conditions on a weakly reversible network (\ref{crn2}) sufficient to guarantee the associated mass-action system is complex balanced for all rate constants (Theorem 4A, \cite{H}). Further consideration of what the conditions on the rate constants required to guarantee complex balancing look like is contained in \cite{C-D-S-S}.

\section{Conjugate Chemical Reaction Networks}
\label{conjugatedynamicalsystemssection}

The focus of this paper is determining when two mass-action systems can be shown to have qualitatively identical behaviour despite disparate reaction networks (\ref{crn}). Our approach to this problem is to show that there is a suitably nice mapping between the flows of (\ref{de}) for one network and another. In the standard theory of differential equations, this notion is captured in the well-studied concept of \emph{conjugacy} (for a complete introduction to conjugacy of dynamical systems, see \cite{P,W}).

We define the reaction network $\mathcal{N}'=(\mathcal{S},\mathcal{C}',\mathcal{R}')$ as consisting of the reaction set
\begin{equation}
\label{crn3}
\mathcal{R}_{i}': \hspace{0.5in} \sum_{j=1}^m \tilde{z}_{ij} \mathcal{A}_j \; \longrightarrow \; \sum_{j=1}^m \tilde{z}_{ij}' \mathcal{A}_j, \hspace{0.5in} i=1, \ldots, \tilde{r}
\end{equation}
or, alternatively,
\begin{equation}
\label{crn4}
\mathcal{R}_{i}': \hspace{0.5in} \tilde{\mathcal{C}}_i \; \longrightarrow \; \tilde{\mathcal{C}}_i', \hspace{0.5in} i=1, \ldots, \tilde{r}.
\end{equation}
The associated mass-action system will be denoted $(\mathcal{S},\mathcal{C}',\mathcal{R}',\mathbf{k}')$ where the rate constants are given by $\tilde{k}_i > 0$, $i = 1, \ldots, \tilde{r}$. We will let $\Phi(\mathbf{x}_0,t)$ denote the flow associated with the mass-action kinetics (\ref{de}) for $\mathcal{N}$ and $\Psi(\mathbf{y}_0,t)$ denote the flow associated with the mass-action kinetics (\ref{de}) for $\mathcal{N}'$. We will adopt the convention of referring to $\mathcal{N}$ as the \emph{original network} and $\mathcal{N}'$ as the \emph{target network}.

Note that, while we follow the notation of \cite{C-P} in denoting any second network by $\mathcal{N}'=(\mathcal{S},\mathcal{C}',\mathcal{R}')$, we distinguish the relevant components of the second network using tild\'{e}s to avoid confusion with the vectors $\mathbf{z}_i'$ from the first system. Also notice that the networks $\mathcal{N}$ and $\mathcal{N}'$ are allowed to have not only different complexes and reactions, but different \emph{numbers} of complexes and reactions; the number of species $|\mathcal{S}|=m$, however, is required to be the same.

We are now prepared to define our notion of conjugacy of chemical reaction networks.

\begin{definition}
\label{conjugate}
Consider two mass-action systems $\mathcal{N}$ and $\mathcal{N}'$. We will say $\mathcal{N}$ and $\mathcal{N}'$ are \textbf{$\mathbf{C}^k$-conjugate} if there exists a $\mathbf{C}^k$-diffeomorphism $\mathbf{h}: \mathbb{R}^m_{>0} \mapsto \mathbb{R}_{>0}^m$ such that $\mathbf{h}(\Phi(\mathbf{x}_0,t))=\Psi(\mathbf{h}(\mathbf{x}_0),t)$ for all $\mathbf{x}_0 \in \mathbb{R}_{>0}^m$.
\end{definition}

\begin{definition}
\label{linearconjugate}
We will say $\mathcal{N}$ and $\mathcal{N}'$ are \textbf{linearly conjugate} if they are $\mathbf{C}^\infty$-conjugate and the diffeomorphism $\mathbf{h}: \mathbb{R}_{>0}^m \mapsto \mathbb{R}_{>0}^m$ is linear.
\end{definition}

In this paper, we will focus on the notion of linear conjugacy. Note that any linear diffeomorphism is necessarily $\mathbf{C}^\infty$ so that any linear conjugacy is a $\mathbf{C}^\infty$-conjugacy. The following results clarify the form the linear mapping $\mathbf{h}: \mathbb{R}_{>0}^m \mapsto \mathbb{R}_{>0}^m$ may take and the implications of conjugacy.

\begin{lemma}
\label{sillylemma}
A linear, bijective mapping $\mathbf{h}: \mathbb{R}_{>0}^m \mapsto \mathbb{R}_{>0}^m$ may consist of at most positively scaling and reindexing of coordinates.
\end{lemma}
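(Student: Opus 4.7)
The plan is to write the linear map in matrix form, deduce non-negativity of both the matrix and its inverse from the fact that it carries $\mathbb{R}_{>0}^m$ onto $\mathbb{R}_{>0}^m$, and then invoke a classical matrix-theoretic fact that any such pair must be a pair of monomial matrices with positive entries. Since scaling by a positive diagonal matrix followed by a permutation is exactly the ``positively scale and reindex'' operation in the statement, this finishes the lemma.

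First, because $\mathbf{h}$ is linear on the open cone $\mathbb{R}_{>0}^m$, it extends uniquely to a linear map on $\mathbb{R}^m$, so we may write $\mathbf{h}(\mathbf{x}) = T\mathbf{x}$ for some $m \times m$ real matrix $T$. Bijectivity of $\mathbf{h}$ between open subsets of $\mathbb{R}^m$ of the same dimension forces $T$ to be invertible, and the inverse $\mathbf{h}^{-1}(\mathbf{y}) = T^{-1}\mathbf{y}$ also maps $\mathbb{R}_{>0}^m$ bijectively onto $\mathbb{R}_{>0}^m$.

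Next I would show that every entry of $T$ is non-negative, and likewise for $T^{-1}$. For fixed indices $i,j$ and any $\delta>0$, the vector $\mathbf{e}_j + \delta\sum_{k\neq j}\mathbf{e}_k$ lies in $\mathbb{R}_{>0}^m$, so its image $T_{\cdot j} + \delta\sum_{k\neq j} T_{\cdot k}$ has strictly positive $i$-th coordinate. Letting $\delta\to 0^+$ gives $T_{ij} \geq 0$ by continuity, and the same argument applied to $T^{-1}$ gives $(T^{-1})_{ij}\geq 0$.

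The last step is the combinatorial claim: if $T$ and $T^{-1}$ are both entrywise non-negative, then $T$ has exactly one non-zero entry in each row and column, and that entry is positive. I would argue as follows: suppose row $i$ of $T$ has non-zero entries in columns $k_1,\ldots,k_p$. The identity $TT^{-1}=I$ yields, for each $j\neq i$, the equation $\sum_l T_{il}(T^{-1})_{lj}=0$, and because all summands are non-negative this forces $(T^{-1})_{k_s,j}=0$ for every $s$ and every $j\neq i$. Hence rows $k_1,\ldots,k_p$ of $T^{-1}$ are supported only in column $i$, which makes any two of them linearly dependent; invertibility of $T^{-1}$ then forces $p=1$. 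Applying the same argument to $T^{-1}T=I$ handles columns. Thus $T = P D$ for a permutation matrix $P$ and a positive diagonal matrix $D$, which is precisely the stated form. The main obstacle is really just this last monomial-matrix step; the positivity passage is routine continuity, but the combinatorial argument has to be phrased carefully to exploit invertibility of the inverse at exactly the right moment.
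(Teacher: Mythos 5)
Your proposal is correct and follows essentially the same route as the paper: represent $\mathbf{h}$ by a matrix, deduce entrywise non-negativity of the matrix and its inverse, and then use $TT^{-1}=I$ to show that more than one non-zero entry in a row would force linearly dependent rows in the inverse, contradicting invertibility. Your limiting argument for non-negativity is a welcome extra detail (the paper simply asserts this step, even though the standard basis vectors lie on the boundary of $\mathbb{R}_{>0}^m$ rather than in it), but the overall structure is the same.
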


\begin{proof}
Consider a linear, bijective mapping $\mathbf{h}: \mathbb{R}_{>0}^m \mapsto \mathbb{R}_{>0}^m$. Since $\mathbf{h}(\mathbf{x})$ is linear, it can be represented $\mathbf{h}(\mathbf{x})=A\mathbf{x}$ where $A \in \mathbb{R}^{m \times m}$ and since $\mathbf{h}(\mathbf{x})$ is bijective, it has an inverse $\mathbf{h}^{-1}(\mathbf{x})=A^{-1}\mathbf{x}.$ Since the mappings are from $\mathbb{R}_{>0}^m$ to $\mathbb{R}_{>0}^m$, all entries in $A$ and $A^{-1}$ must be non-negative and every row of $A$ and $A^{-1}$ must contain at least one non-zero entry.

Suppose there is a row of $A$ with more than one non-zero entry. Since $A$ and $A^{-1}$ may contain no negative numbers, in order to satisfy $A \; A^{-1}=I$ this implies that there are at least two rows of $A^{-1}$ which contain zeroes in the same $m-1$ columns. Such an $A^{-1}$, however, would have a zero determinant and therefore be non-invertible, which is a contradiction.

It follows that each row of $A$ has precisely one positive entry. Since $A$ is invertible it follows that each column of $A$ also has precisely one positive entry so that $A$ is a positively weighted permutation matrix. In terms of the transformation $\mathbf{h}(\mathbf{x})=A\mathbf{x}$ this means the mapping may only positively scale and re-index the components of the vector $\mathbf{x}$, which completes the proof.
\end{proof}

\begin{lemma}
\label{lemma112}
If a mass-action system $\mathcal{N}$ is linearly conjugate to a mass-action system $\mathcal{N}'$ and $\mathcal{N}'$ exhibits locally stable dynamics, then $\mathcal{N}$ exhibits locally stable dynamics.
\end{lemma}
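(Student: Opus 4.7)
The plan is to unpack the conjugacy identity $\mathbf{h}(\Phi(\mathbf{x}_0,t))=\Psi(\mathbf{h}(\mathbf{x}_0),t)$ and translate each piece of Definition \ref{locallystabledynamics} for $\mathcal{N}'$ back to $\mathcal{N}$. First I would differentiate the conjugacy relation at $t=0$. Writing $\mathbf{h}(\mathbf{x})=A\mathbf{x}$ (with $A$ a positively weighted permutation matrix by Lemma \ref{sillylemma}), and letting $\mathbf{f}$ and $\mathbf{g}$ denote the mass-action right-hand sides for $\mathcal{N}$ and $\mathcal{N}'$ respectively, this yields the key pointwise identity $A\mathbf{f}(\mathbf{x})=\mathbf{g}(A\mathbf{x})$ for all $\mathbf{x}\in\mathbb{R}_{>0}^m$. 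Two immediate consequences follow: (a) $A$ carries the kinetic subspace $S^*$ of $\mathcal{N}$ onto the kinetic subspace $\tilde{S}^*$ of $\mathcal{N}'$, since each is the span of the image of the corresponding right-hand side; and (b) since $A$ is invertible, $\mathbf{x}^*$ is an equilibrium of $\mathcal{N}$ if and only if $A\mathbf{x}^*$ is an equilibrium of $\mathcal{N}'$.

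Next I would show that $\mathbf{h}$ carries kinetic compatibility classes to kinetic compatibility classes. Using (a) together with the fact that both $A$ and $A^{-1}$ preserve $\mathbb{R}_{>0}^m$, one computes
\[\mathbf{h}(\mathsf{C}^*_{\mathbf{x}_0})=A\bigl((\mathbf{x}_0+S^*)\cap\mathbb{R}_{>0}^m\bigr)=(A\mathbf{x}_0+\tilde{S}^*)\cap\mathbb{R}_{>0}^m=\mathsf{C}^*_{\mathbf{h}(\mathbf{x}_0)}.\]
Combined with (b), this implies the unique positive equilibrium $\mathbf{y}^*$ of $\mathcal{N}'$ in $\mathsf{C}^*_{\mathbf{h}(\mathbf{x}_0)}$ pulls back via $A^{-1}$ to the unique positive equilibrium $\mathbf{x}^*=A^{-1}\mathbf{y}^*$ of $\mathcal{N}$ in $\mathsf{C}^*_{\mathbf{x}_0}$, establishing the uniqueness part of Definition \ref{locallystabledynamics}.

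Finally, I would verify that local asymptotic stability is transported across the conjugacy by the standard argument. Since $\mathbf{h}$ restricted to $\mathsf{C}^*_{\mathbf{x}_0}$ is a homeomorphism onto $\mathsf{C}^*_{\mathbf{h}(\mathbf{x}_0)}$, any neighbourhood $U$ of $\mathbf{x}^*$ in $\mathsf{C}^*_{\mathbf{x}_0}$ is sent to a neighbourhood $\mathbf{h}(U)$ of $\mathbf{y}^*$ in $\mathsf{C}^*_{\mathbf{h}(\mathbf{x}_0)}$. Local asymptotic stability of $\mathbf{y}^*$ produces a neighbourhood $V\subseteq\mathbf{h}(U)$ whose forward $\Psi$-orbit remains in $\mathbf{h}(U)$ and converges to $\mathbf{y}^*$. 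Applying $\mathbf{h}^{-1}$ and using $\Phi(\,\cdot\,,t)=\mathbf{h}^{-1}\circ\Psi(\,\cdot\,,t)\circ\mathbf{h}$ shows that $\mathbf{h}^{-1}(V)$ is a neighbourhood of $\mathbf{x}^*$ in $\mathsf{C}^*_{\mathbf{x}_0}$ whose forward $\Phi$-orbit stays in $U$ and converges to $\mathbf{x}^*$. I do not expect any serious obstacle: the argument is essentially the classical fact that a conjugacy preserves qualitative phase-portrait features, the only mild subtlety being to confirm that $\mathbf{h}$ respects the \emph{kinetic} subspaces rather than merely the stoichiometric ones, which is exactly what differentiating the conjugacy identity delivers.
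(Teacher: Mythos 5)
Your argument is correct and is essentially the paper's own proof written out in full: the paper simply states that the result ``follows trivially'' from Lemma \ref{sillylemma} and Definition \ref{conjugate}, and your steps (differentiating the conjugacy identity, matching equilibria and kinetic compatibility classes under the positively weighted permutation $A$, and transporting local asymptotic stability through the homeomorphism) are exactly the details being left to the reader. Your observation that one must check $\mathbf{h}$ respects the \emph{kinetic} subspaces, not just the stoichiometric ones, is a worthwhile point the paper glosses over.
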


\begin{proof}
The result follows trivially from Lemma \ref{sillylemma} and Definition \ref{conjugate}.
\end{proof}

It is worth noting that other qualitative properties of mass-action systems are also preserved by linear conjugacy (multistability, persistence, boundedness, etc.). Some aspects of qualitative equivalence of $\mathcal{N}$ and $\mathcal{N}'$ can, however, fail for non-linear conjugacies (see the discussion in Section \ref{conclusionsection} and the Appendix).

\subsection{Known Results}
\label{knownresultssection}

In this section, we give a brief summary of the results which are, to the best of our knowledge, the only attempts to demonstrate conjugacy of two mass-action systems. In all of the results described in this section, conjugacy is demonstrated by showing an exact equivalence between the governing differential equations (\ref{de}) for $\mathcal{N}$ and $\mathcal{N}'$. (This phenomenon is called \emph{macro-equivalence} in \cite{H-J1} and \emph{confoundability} in \cite{C-P}. In \cite{Sz2} and the related literature, two networks with identical dynamics are called two \emph{realizations} of the same reaction kinetic differential equations.)

The most thorough study of conjugacy to date has been conducted by Craciun and Pantea \cite{C-P}. In that paper, the authors considered the problem of experimentally assigning values to rate constants to systems with linearly dependent reactions flowing from the same reactant complexes. They present the following result.

\begin{theorem}[Theorem 4.4, \cite{C-P}]
\label{craciun}
There exist rate constants choices such that the mass-action systems $\mathcal{N}$ and $\mathcal{N}'$ are conjugate with $\mathbf{h}(\mathbf{x})=\mathbf{x}$ if and only if they have the same reactant complexes and $C_{\mathcal{R}}(\mathcal{C}^0) \cap C_{\mathcal{R}'}(\mathcal{C}^0) \not= \emptyset$ for every reactant complex $\mathcal{C}^0$, where
\begin{equation}
\label{reactioncone}
C_{\mathcal{R}}(\mathcal{C}^0) = \left\{ \sum_{i=1}^r \alpha_i (\mathbf{z}_i'-\mathbf{z}_i) \; | \; \alpha_i > 0 \mbox{ if } \mathbf{z}_i = \mathbf{z}^0, \alpha_i=0 \mbox{ otherwise} \right\}.
\end{equation}
\end{theorem}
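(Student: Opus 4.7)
The plan is to translate identity-conjugacy into an equality of polynomial vector fields and then use linear independence of monomials to decompose the problem one reactant complex at a time. First, observe that $\mathbf{h}(\mathbf{x}) = \mathbf{x}$ in Definition~\ref{conjugate} forces $\Phi(\mathbf{x}_0,t) = \Psi(\mathbf{x}_0,t)$ on $\mathbb{R}^m_{>0}$; differentiating at $t=0$ and appealing to uniqueness of solutions reduces this to $\mathbf{f}(\mathbf{x}) = \tilde{\mathbf{f}}(\mathbf{x})$ on $\mathbb{R}^m_{>0}$. Substituting the expression (\ref{de}) for both sides gives the polynomial identity
\[
\sum_{i=1}^r k_i (\mathbf{z}_i' - \mathbf{z}_i)\, \mathbf{x}^{\mathbf{z}_i} \;=\; \sum_{i=1}^{\tilde{r}} \tilde{k}_i (\tilde{\mathbf{z}}_i' - \tilde{\mathbf{z}}_i)\, \mathbf{x}^{\tilde{\mathbf{z}}_i}
\]
to be satisfied for all $\mathbf{x} \in \mathbb{R}^m_{>0}$.

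Next, since the monomials $\{\mathbf{x}^{\mathbf{z}}\}_{\mathbf{z} \in \mathbb{Z}_{\geq 0}^m}$ are linearly independent as functions on any open subset of $\mathbb{R}^m_{>0}$, I would group terms on each side by reactant stoichiometry and equate coefficients. This reduces the above identity to the requirement that, for every $\mathbf{z}^0 \in \mathbb{Z}_{\geq 0}^m$,
\[
\sum_{\{i\,:\, \mathbf{z}_i = \mathbf{z}^0\}} k_i (\mathbf{z}_i' - \mathbf{z}_i) \;=\; \sum_{\{i\,:\, \tilde{\mathbf{z}}_i = \mathbf{z}^0\}} \tilde{k}_i (\tilde{\mathbf{z}}_i' - \tilde{\mathbf{z}}_i).
\]
By the definition (\ref{reactioncone}), with $\mathbf{z}^0$ the stoichiometric vector of $\mathcal{C}^0$, the left side is an arbitrary element of $C_{\mathcal{R}}(\mathcal{C}^0)$ when $\mathcal{C}^0$ is a reactant complex of $\mathcal{N}$ (and is the empty sum otherwise), with the analogous statement for the right side relative to $\mathcal{N}'$.

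For the forward direction, assuming the existence of suitable $\mathbf{k},\mathbf{k}'$, the common value of the two sums at each reactant complex $\mathcal{C}^0$ of $\mathcal{N}$ lies in $C_{\mathcal{R}}(\mathcal{C}^0)$ by construction and must also lie in $C_{\mathcal{R}'}(\mathcal{C}^0)$; this last containment, together with the strict positivity built into (\ref{reactioncone}), forces $\mathcal{C}^0$ to be a reactant complex of $\mathcal{N}'$ as well. Swapping the roles of $\mathcal{N}$ and $\mathcal{N}'$ yields the reverse containment, so the reactant complexes of the two networks coincide and the two cones share the common value. For the converse, given that the reactant complexes agree and choosing any $\mathbf{v}_{\mathcal{C}^0} \in C_{\mathcal{R}}(\mathcal{C}^0) \cap C_{\mathcal{R}'}(\mathcal{C}^0)$ at each shared reactant complex, the definition of the cones unpacks $\mathbf{v}_{\mathcal{C}^0}$ into strictly positive coefficients, which I would read off as the rate constants $k_i$ and $\tilde{k}_i$ for reactions leaving $\mathcal{C}^0$ in $\mathcal{N}$ and $\mathcal{N}'$ respectively. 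Concatenating these across all reactant complexes produces $\mathbf{k}$ and $\mathbf{k}'$ satisfying the polynomial identity and hence the identity-conjugacy condition.

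The main obstacle is the bookkeeping around reactant complexes that appear in only one network: if positive combinations of outflow vectors at such a $\mathcal{C}^0$ were allowed to cancel to $\mathbf{0}$, the cone intersection condition could be trivially satisfied even when the reactant complex sets differ. Handling this cleanly requires a careful reading of (\ref{reactioncone}) as demanding \emph{strictly} positive multipliers on every reaction leaving $\mathcal{C}^0$, so that the cone does not contain $\mathbf{0}$ generically and the argument above goes through; this is the setting in which Craciun and Pantea state the result, and it is the only subtlety beyond the linear-independence step that drives the whole proof.
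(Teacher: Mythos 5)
First, note that the paper does not prove Theorem \ref{craciun} at all: it is imported verbatim from \cite{C-P} (their Theorem 4.4) and used only as a point of comparison, so there is no in-paper argument to measure yours against. Taken on its own terms, your reconstruction follows the natural route, and the backward direction is sound: picking a point $\mathbf{v}_{\mathcal{C}^0}$ in each intersection $C_{\mathcal{R}}(\mathcal{C}^0) \cap C_{\mathcal{R}'}(\mathcal{C}^0)$, reading off the strictly positive multipliers as rate constants, and observing that the two vector fields in (\ref{de}) then agree term by term does produce identity-conjugate systems. The reduction of identity conjugacy to equality of vector fields via uniqueness of solutions, and the decomposition by reactant complex via linear independence of the distinct monomials $\mathbf{x}^{\mathbf{z}}$ on an open set, are also correct and are the right tools.

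The genuine gap is in your forward direction, and you half-see it yourself. From the coefficient identity at a complex $\mathcal{C}^0$ that is a reactant complex of $\mathcal{N}$ but not of $\mathcal{N}'$, you obtain $\sum_{\mathbf{z}_i = \mathbf{z}^0} k_i(\mathbf{z}_i'-\mathbf{z}_i) = \mathbf{0}$, and your claim that the strict positivity built into (\ref{reactioncone}) ``forces $\mathcal{C}^0$ to be a reactant complex of $\mathcal{N}'$ as well'' is false: strict positivity of the $\alpha_i$ does not prevent the positive combination of the outflow vectors from vanishing (two reactions out of $\mathcal{C}^0$ with opposite reaction vectors already do it). Your closing remark that the cone ``does not contain $\mathbf{0}$ generically'' concedes the point without repairing it --- genericity is not available here, since the theorem quantifies over all networks and asserts an exact equivalence. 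This is precisely the deficiency the paper itself records immediately after the theorem statement, citing \cite{Sz1}: identity conjugacy can hold between networks with different reactant complex sets provided the net outflow from the extra complexes cancels. So the ``same reactant complexes'' half of the forward implication cannot be established as you have written it; to close the argument you would need either the additional hypothesis that $\mathbf{0} \notin C_{\mathcal{R}}(\mathcal{C}^0)$ and $\mathbf{0} \notin C_{\mathcal{R}'}(\mathcal{C}^0)$ for every reactant complex, or a correspondingly weakened conclusion.
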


This theorem gives necessary and sufficient conditions for two chemical reaction networks $\mathcal{N}$ and $\mathcal{N}'$ to admit rate constant vectors which generate the same set of governing differential equations (\ref{de}). It is clear that conjugacy follows according to Definition \ref{conjugate}. It should be noted, however, that two reaction networks may be conjugate for some choices of the rate constants and not for others. (It was noted in \cite{Sz1} that this result is deficient in that it ignores the possibility that the net flow from a reactant complex in either $\mathcal{N}$ or $\mathcal{N}'$ could equal zero. Consequently, conjugacy could hold for networks with different reactant complexes so long as the corresponding outflows cancel in (\ref{de}). We will ignore this sensitivity for the time-being.)


In practice, we are typically asked to determine the dynamics of a mass-action system $\mathcal{N}$ without reference to any potential target networks $\mathcal{N}'$. The results of \cite{C-P} propose no methodology to find a potential $\mathcal{N}'$ satisfying Theorem \ref{craciun}, i.e. the target network has to be specified. Szederk\'{e}nyi has extended the work contained in \cite{C-P} by presenting computer algorithms capable of finding networks $\mathcal{N}'$ with equivalent mass-action kinetics (\ref{de}) as $\mathcal{N}$. He calls the networks $\mathcal{N}$ and $\mathcal{N}'$ alternative \emph{realizations} of the kinetics (\ref{de}). In \cite{Sz2} he gives an algorithm for finding sparse and dense realizations (i.e. realizations with the fewest and greatest number of reactions) and in \cite{Sz-H} he and Hangos give an algorithm for determining detailed and complex balanced realizations for specified kinetics (\ref{de}).

Averbukh also considers conditions which relate the dynamics of an undetermined network $\mathcal{N}$ to a network $\mathcal{N}'$ with known dynamics. In particular, he presents conditions under which a general network has the same dynamics as a detailed balanced network (Theorem 2 of \cite{A1}). This is a powerful result since detailed balanced networks are known to exhibit locally stable dynamics \cite{H-J1,V-H}.

Conjugacy is also considered by Krambeck in Section 6 of \cite{K} for detailed balanced systems where it is referred to as \emph{non-uniqueness} of the rate constants. Horn and Jackson briefly consider conjugate systems in their seminal paper \cite{H-J1}. Their primary example is the network
\[\begin{array}{ccccc}
& 2\mathcal{A}_1 + \mathcal{A}_2 & \stackrel{1}{\longrightarrow} & 3\mathcal{A}_1& \\
\mathcal{N}: \hspace{0.5in} & {}^{\epsilon}\uparrow & & \downarrow {}_{\epsilon}& \hspace{0.5in}\\
& 3\mathcal{A}_2 & \stackrel{1}{\longleftarrow} & \mathcal{A}_1 + 2\mathcal{A}_2& 
\end{array}\]
where $\epsilon > 0$. They show that the network exhibits locally stable dynamics for $\epsilon \geq 1/6$ and that the network possesses the same mass-action kinetics (\ref{de}) as a complex balanced network $\mathcal{N}'$ for $\epsilon \geq 1/2$. The Master's thesis of MacLean also contains specific examples of networks which are conjugate to complex balanced systems \cite{M}. This connection with complex balanced systems is made more explicit in her unpublished research notes.

\subsection{Original Results}
\label{originalresultssection}

In this section, we present our main original result regarding conjugacy of chemical reaction networks. In Section \ref{weaklyreversiblenetworkssection}, we show how Theorem \ref{maintheorem} can broaden the scope of weakly reversible networks theory through several illustrative examples.

In what follows, we will let $\mathcal{C}_{react}$ denote the set of reactant complexes in either the complex set $\mathcal{C}$ or the complex set $\mathcal{C}'$.

\begin{theorem}
\label{maintheorem}
Consider two mass-action systems $\mathcal{N}$ and $\mathcal{N}'$. Suppose that for the rate constants $k_i > 0$, $i = 1, \ldots, r$, there exist constants $b_i>0$, $i=1, \ldots, \tilde{r},$ and $c_j>0$, $j=1, \ldots, m$, such that, for every $\mathcal{C}^0 \in \mathcal{C}_{react}$,
\begin{equation}
\label{condition}
\mathop{\sum_{i=1}^r}_{\mathcal{C}_i = \mathcal{C}^0} k_i(\mathbf{z}_i'-\mathbf{z}_i) = T \mathop{\sum_{i = 1}^{\tilde{r}}}_{\tilde{\mathcal{C}}_i = \mathcal{C}^0} b_i(\tilde{\mathbf{z}}_i' - \tilde{\mathbf{z}}_i)
\end{equation}
where $T = $diag$\left\{ c_j \right\}_{j=1}^m$. Then $\mathcal{N}$ is linearly conjugate to $\mathcal{N}'$ with rate constants
\begin{equation}
\label{newrateconstants}
\tilde{k}_i = b_i \prod_{j=1}^m c_j^{\tilde{z}_{ij}}, \hspace{0.5in} i = 1, \ldots \tilde{r}.
\end{equation}
\end{theorem}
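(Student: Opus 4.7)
The plan is to exhibit the linear conjugacy explicitly: define $\mathbf{h}:\mathbb{R}^m_{>0}\to\mathbb{R}^m_{>0}$ by $\mathbf{h}(\mathbf{x})=T^{-1}\mathbf{x}$. Since $c_j>0$ for every $j$, $T^{-1}=\mathrm{diag}\{c_j^{-1}\}$ is a well-defined positive diagonal matrix, so $\mathbf{h}$ is a linear bijection of $\mathbb{R}^m_{>0}$ onto itself (in the form identified by Lemma \ref{sillylemma}, with no reindexing and positive scalings $c_j^{-1}$). To establish Definition \ref{linearconjugate} it then suffices by uniqueness of solutions to show that $\mathbf{h}$ intertwines the two vector fields, i.e.\ that
\[
T^{-1}\mathbf{f}(\mathbf{x}) \;=\; \tilde{\mathbf{f}}(T^{-1}\mathbf{x}) \qquad \text{for every } \mathbf{x}\in\mathbb{R}^m_{>0},
\]
where $\mathbf{f}$ and $\tilde{\mathbf{f}}$ are the mass-action vector fields of $\mathcal{N}$ and $\mathcal{N}'$ respectively. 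Once this pointwise identity is in hand, differentiating $\mathbf{h}(\Phi(\mathbf{x}_0,t))$ in $t$ shows it satisfies the initial value problem solved by $\Psi(\mathbf{h}(\mathbf{x}_0),t)$, and the conjugacy relation follows.

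The core step is the computation of both sides of the displayed identity by first regrouping reactions according to their common reactant complex. Writing $\mathbf{z}^0$ for the stoichiometric vector of a reactant complex $\mathcal{C}^0\in\mathcal{C}_{\mathrm{react}}$, one has
\[
\mathbf{f}(\mathbf{x}) \;=\; \sum_{\mathcal{C}^0\in\mathcal{C}_{\mathrm{react}}} \mathbf{x}^{\mathbf{z}^0} \!\!\!\mathop{\sum_{i=1}^r}_{\mathcal{C}_i=\mathcal{C}^0}\!\! k_i(\mathbf{z}_i'-\mathbf{z}_i),
\]
so that multiplying through by $T^{-1}$ and applying hypothesis (\ref{condition}) (which, after left-multiplying by $T^{-1}$, reads $T^{-1}\sum k_i(\mathbf z_i'-\mathbf z_i)=\sum b_i(\tilde{\mathbf z}_i'-\tilde{\mathbf z}_i)$) collapses the left-hand side to $\sum_{\mathcal{C}^0}\mathbf{x}^{\mathbf{z}^0}\sum_{\tilde{\mathcal{C}}_i=\mathcal{C}^0} b_i(\tilde{\mathbf{z}}_i'-\tilde{\mathbf{z}}_i)$.

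For the right-hand side, the monomial identity $(T^{-1}\mathbf{x})^{\tilde{\mathbf{z}}_i}=\prod_{j=1}^m c_j^{-\tilde{z}_{ij}}\mathbf{x}^{\tilde{\mathbf{z}}_i}$ combines with the definition $\tilde k_i=b_i\prod_j c_j^{\tilde z_{ij}}$ (formula (\ref{newrateconstants})) to give an exact cancellation of the $c_j$ factors, yielding
\[
\tilde{\mathbf{f}}(T^{-1}\mathbf{x}) \;=\; \sum_{i=1}^{\tilde r} b_i(\tilde{\mathbf{z}}_i'-\tilde{\mathbf{z}}_i)\mathbf{x}^{\tilde{\mathbf{z}}_i} \;=\; \sum_{\mathcal{C}^0}\mathbf{x}^{\mathbf{z}^0}\mathop{\sum_{i=1}^{\tilde r}}_{\tilde{\mathcal{C}}_i=\mathcal{C}^0} b_i(\tilde{\mathbf{z}}_i'-\tilde{\mathbf{z}}_i),
\]
using again that $\tilde{\mathbf{z}}_i=\mathbf{z}^0$ whenever $\tilde{\mathcal{C}}_i=\mathcal{C}^0$. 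The two expressions coincide term by term, completing the verification.

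The only part that requires any care is choosing the correct direction of the diagonal transformation: because $T$ already appears on the right of (\ref{condition}) and the monomial scaling $(T^{-1}\mathbf{x})^{\tilde{\mathbf{z}}_i}$ must precisely cancel the $\prod_j c_j^{\tilde z_{ij}}$ factor built into $\tilde k_i$, the conjugating map is $T^{-1}$ rather than $T$. Beyond this bookkeeping, the argument is a direct calculation; no deeper structural result is needed, and no distinction between $\mathcal{C}^0$ being a reactant complex of $\mathcal{N}$, of $\mathcal{N}'$, or of both, is required, since the corresponding inner sum is simply empty in the absent case.
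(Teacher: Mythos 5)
Your proposal is correct and follows essentially the same route as the paper: apply the map $\mathbf{h}(\mathbf{x})=T^{-1}\mathbf{x}$, regroup the mass-action sum by reactant complex, invoke hypothesis (\ref{condition}), and cancel the $\prod_j c_j^{\tilde{z}_{ij}}$ factors against the definition (\ref{newrateconstants}) of $\tilde{k}_i$. The only (cosmetic) difference is that you verify the intertwining identity $T^{-1}\mathbf{f}(\mathbf{x})=\tilde{\mathbf{f}}(T^{-1}\mathbf{x})$ pointwise and then appeal to uniqueness of solutions, whereas the paper performs the same computation directly on the flow by setting $\Psi(\mathbf{y}_0,t)=T^{-1}\Phi(\mathbf{x}_0,t)$ and differentiating.
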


It is important to note that the reactant complex set for $\mathcal{C}$ \emph{need not} be the same as that of $\mathcal{C}'$. When $\mathcal{C}^0 \in \mathcal{C}_{react}$ is not an element of the reactant complex set of $\mathcal{C}$, we will consider the summation on the left-hand side of (\ref{condition}) to be empty, and similarly for the right-hand side of (\ref{condition}) when $\mathcal{C}^0$ is not an element of the reactant complex set of $\mathcal{C}'$, i.e.
\[\mathop{\sum_{i=1}^r}_{\mathcal{C}_i = \mathcal{C}^0} k_i(\mathbf{z}_i'-\mathbf{z}_i) = \mathbf{0} \; \; \; \; \; \mbox{and} \; \; \; \; \; \mathop{\sum_{i = 1}^{\tilde{r}}}_{\tilde{\mathcal{C}}_i = \mathcal{C}^0} b_i(\tilde{\mathbf{z}}_i' - \tilde{\mathbf{z}}_i) = \mathbf{0},\]
respectively. In order to satisfy (\ref{condition}), therefore, if one system contains a reactant complex not contained in the other, it is necessary for the origin to lie in the cone generated by the reaction vectors flowing from that reactant complex in the other system.

\begin{proof}
Let $\Phi(\mathbf{x}_0,t)$ correspond to the flow of the mass-action system (\ref{de}) associated to the reaction network $\mathcal{N}$ given by (\ref{crn}). Consider the linear mapping $\mathbf{h}(\mathbf{x})=T^{-1}\mathbf{x}$ where $T = $diag$\left\{ c_j \right\}_{j=1}^m$. Now define $\Psi(\mathbf{y}_0,t)=T^{-1}\Phi(\mathbf{x}_0,t)$ so that $\Phi(\mathbf{x}_0,t) = T \Psi(\mathbf{y}_0,t)$.


Since $\Phi(\mathbf{x}_0,t)$ is a solution of (\ref{de}) for the reaction set (\ref{crn}), we have
\[\begin{split}\Psi'(\mathbf{y}_0,t) & = T^{-1} \Phi'(\mathbf{x}_0,t) \\ & = T^{-1} \sum_{i=1}^r k_i (\mathbf{z}_i'-\mathbf{z}_i) \; \Phi(\mathbf{x}_0,t)^{\mathbf{z}_i} \\ & = T^{-1} \sum_{\mathcal{C}^0 \in \mathcal{C}_{react}} \mathop{\sum_{i=1}^r}_{\mathcal{C}_i = \mathcal{C}^0} k_i (\mathbf{z}_i'-\mathbf{z}_i) \; \Phi(\mathbf{x}_0,t)^{\mathbf{z}^0} \\ & = T^{-1} \sum_{\mathcal{C}^0 \in \mathcal{C}_{react}} T \mathop{\sum_{i=1}^{\tilde{r}}}_{\tilde{\mathcal{C}}_i = \mathcal{C}^0} b_i (\tilde{\mathbf{z}}_i'-\tilde{\mathbf{z}}_i) (T \; \Psi(\mathbf{y}_0,t))^{\mathbf{z}^0} \\ & = \sum_{i=1}^{\tilde{r}} \left( b_i \prod_{j=1}^m c_j^{\tilde{z}_{ij}}\right) (\tilde{\mathbf{z}}_i'-\tilde{\mathbf{z}}_i) \; \Psi(\mathbf{y}_0,t)^{\mathbf{z}_i}.\end{split}\]
It is clear that $\Psi(\mathbf{y}_0,t)$ is the flow of (\ref{de}) for the reaction network (\ref{crn3}) with rate constants given by (\ref{newrateconstants}). We have that $\mathbf{h}(\Phi(\mathbf{x}_0,t))=\Psi(\mathbf{h}(\mathbf{x}_0),t)$ for all $\mathbf{x}_0 \in \mathbb{R}_{>0}^m$ and $t \geq 0$ where $\mathbf{y}_0 = \mathbf{h}(\mathbf{x}_0)$ since $\mathbf{y}_0=\Psi(\mathbf{y}_0,0)=T^{-1}\Phi(\mathbf{x}_0,0)=T^{-1}\mathbf{x}_0$. It follows that the networks $\mathcal{N}$ and $\mathcal{N}'$ are linearly conjugate by Definition \ref{linearconjugate}, and we are done.
\end{proof}

This result gives conditions under which two mass-action systems $\mathcal{N}$ and $\mathcal{N}'$ are linearly conjugate. This is a particularly useful result when the qualitative properties of the original network are obscure while the behaviour of the target network is well understood.

With the exception of the scaling matrix $T$, condition (\ref{condition}) is very similar to the cone intersection condition in Theorem \ref{craciun} where the constants $k_i > 0$, $i =1, \ldots, r,$ and $b_i>0$, $i=1, \ldots, \tilde{r}$, correspond to the magnitudes of the cone generators (i.e. the reaction vectors). If we allow $k_i$ and $b_i$ to vary we have
\[C_{\mathcal{R}}(\mathcal{C}^0) = \left\{ \mathop{\sum_{i=1}^r}_{\mathcal{C}_i = \mathcal{C}^0} k_i(\mathbf{z}_i'-\mathbf{z}_i) \; | \; k_i > 0, i = 1, \ldots, r \right\}\]
and
\[C_{\mathcal{R}'}(\mathcal{C}^0) = \left\{ \mathop{\sum_{i = 1}^{\tilde{r}}}_{\tilde{\mathcal{C}}_i = \mathcal{C}^0} b_i(\tilde{\mathbf{z}}_i' - \tilde{\mathbf{z}}_i) \; | \; b_i > 0, i = 1, \ldots, \tilde{r} \right\}\]
according to (\ref{reactioncone}).

The following two results can be obtained from Theorem \ref{maintheorem} by allowing the rate constant vector $\mathbf{k} \in \mathbb{R}_{>0}^r$ to vary. In these results we let $T=$diag$\left\{ c_j \right\}_{j=1}^m$ and consider $c_j > 0$, $j = 1, \ldots, m$, to be fixed.

\begin{corollary}
\label{corollary1}
Consider two mass-action systems $\mathcal{N}$ and $\mathcal{N}'$. Then there exist rate constant vectors $\mathbf{k} \in \mathbb{R}_{>0}^r$ and $\mathbf{k}' \in \mathbb{R}_{>0}^{\tilde{r}}$ such that $\mathcal{N}$ and $\mathcal{N}'$ are linearly conjugate with $\mathbf{h}(\mathbf{x})=T^{-1}\mathbf{x}$ if and only if for every $\mathcal{C}^0 \in \mathcal{C}_{react}$ we have $C_{\mathcal{R}}(\mathcal{C}^0) \cap [ T \; C_{\mathcal{R}'}(\mathcal{C}^0) ] \not= \emptyset$.
\end{corollary}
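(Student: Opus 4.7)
The plan is to prove the biconditional by applying Theorem \ref{maintheorem} in the sufficient direction and by unpacking the definition of linear conjugacy (together with the linear independence of distinct monomials) in the necessary direction. In both directions, the essential observation is that condition (\ref{condition}) of Theorem \ref{maintheorem} is precisely the assertion that a single vector simultaneously lies in $C_{\mathcal{R}}(\mathcal{C}^0)$ and in $T \cdot C_{\mathcal{R}'}(\mathcal{C}^0)$ for each reactant complex $\mathcal{C}^0 \in \mathcal{C}_{react}$.

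For the $(\Leftarrow)$ direction, I would assume the cone intersection condition holds. For each $\mathcal{C}^0 \in \mathcal{C}_{react}$, I pick a vector $\mathbf{v}_{\mathcal{C}^0} \in C_{\mathcal{R}}(\mathcal{C}^0) \cap [T \cdot C_{\mathcal{R}'}(\mathcal{C}^0)]$ and read off, from its two cone representations, positive constants $k_i$ (over reactions with $\mathcal{C}_i = \mathcal{C}^0$) and $b_i$ (over reactions with $\tilde{\mathcal{C}}_i = \mathcal{C}^0$) that satisfy (\ref{condition}). Since each reactant complex determines a disjoint set of reaction indices, these choices can be made independently across all $\mathcal{C}^0$, producing global rate constant vectors $\mathbf{k} \in \mathbb{R}_{>0}^r$ and $\mathbf{b} \in \mathbb{R}_{>0}^{\tilde{r}}$ satisfying the hypothesis of Theorem \ref{maintheorem}. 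That theorem then gives the desired linear conjugacy with $\mathbf{h}(\mathbf{x}) = T^{-1}\mathbf{x}$ and with $\tilde{\mathbf{k}}$ defined by (\ref{newrateconstants}).

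For the $(\Rightarrow)$ direction, suppose $\mathbf{k}$ and $\tilde{\mathbf{k}}$ exist so that $T^{-1}\Phi(\mathbf{x}_0,t) = \Psi(T^{-1}\mathbf{x}_0,t)$ for all $\mathbf{x}_0 \in \mathbb{R}_{>0}^m$. Differentiating this identity at $t=0$ yields the pointwise identity of vector fields $\mathbf{f}(\mathbf{x}) = T\,\mathbf{g}(T^{-1}\mathbf{x})$, where $\mathbf{f}$ and $\mathbf{g}$ are the mass-action vector fields of $\mathcal{N}$ and $\mathcal{N}'$. Expanding the right-hand side gives
\[
T\,\mathbf{g}(T^{-1}\mathbf{x}) = \sum_{i=1}^{\tilde{r}} \left( \tilde{k}_i \prod_{j=1}^m c_j^{-\tilde{z}_{ij}} \right) T(\tilde{\mathbf{z}}_i' - \tilde{\mathbf{z}}_i)\,\mathbf{x}^{\tilde{\mathbf{z}}_i},
\]
so setting $b_i = \tilde{k}_i \prod_{j=1}^m c_j^{-\tilde{z}_{ij}} > 0$ and grouping both $\mathbf{f}$ and $T\mathbf{g}(T^{-1}\cdot)$ by reactant complex gives a vector polynomial identity of the form $\sum_{\mathcal{C}^0 \in \mathcal{C}_{react}} \mathbf{w}_{\mathcal{C}^0}\, \mathbf{x}^{\mathbf{z}^0} = \mathbf{0}$. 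Because the monomials $\mathbf{x}^{\mathbf{z}^0}$ with distinct exponent vectors $\mathbf{z}^0$ are linearly independent as functions on $\mathbb{R}_{>0}^m$, each coefficient $\mathbf{w}_{\mathcal{C}^0}$ vanishes, which is exactly (\ref{condition}). Each side of (\ref{condition}) then exhibits a common vector belonging to both $C_{\mathcal{R}}(\mathcal{C}^0)$ and $T \cdot C_{\mathcal{R}'}(\mathcal{C}^0)$, completing the argument.

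The main subtlety, rather than a substantive obstacle, is the bookkeeping when a given $\mathcal{C}^0 \in \mathcal{C}_{react}$ is a reactant complex of only one of the two networks: the corresponding indexed sum on the other side is empty and must be interpreted as $\mathbf{0}$, per the convention spelled out in the remark following Theorem \ref{maintheorem}. In the sufficient direction this forces the intersection to contain $\mathbf{0}$, which in turn requires a positive combination of the reaction vectors from $\mathcal{C}^0$ in the other network to vanish; one must verify that such a combination yields admissible strictly positive rate constants for use in Theorem \ref{maintheorem}. Everything else reduces to the cone representation and monomial independence arguments already sketched.
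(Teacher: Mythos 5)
Your proposal is correct and follows essentially the same route as the paper: sufficiency of the cone condition via Theorem \ref{maintheorem}, and necessity by equating the two mass-action vector fields under the map $T^{-1}$ and matching coefficients of the monomials $\mathbf{x}^{\mathbf{z}^0}$ grouped by reactant complex. You spell out the linear independence of distinct monomials and the empty-sum convention more explicitly than the paper does, but the argument is the same.
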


\begin{corollary}
\label{corollary2}
Consider two mass-action systems $\mathcal{N}$ and $\mathcal{N}'$. Then for every rate constant vector $\mathbf{k} \in \mathbb{R}_{>0}^r$ there exists a rate constant vector $\mathbf{k}' \in \mathbb{R}_{>0}^{\tilde{r}}$ such that $\mathcal{N}$ is linearly conjugate to $\mathcal{N}'$ with $\mathbf{h}(\mathbf{x}) = T^{-1}\mathbf{x}$ if and only if for every $\mathcal{C}^0 \in \mathcal{C}_{react}$ we have $C_{\mathcal{R}}(\mathcal{C}^0) \subseteq [ T \; C_{\mathcal{R}'}(\mathcal{C}^0) ]$.
\end{corollary}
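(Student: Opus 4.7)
The plan is to reduce the statement to Theorem \ref{maintheorem} after first upgrading that theorem to an ``if and only if'' characterization of linear conjugacy via the specific mapping $\mathbf{h}(\mathbf{x}) = T^{-1}\mathbf{x}$. Sufficiency is exactly Theorem \ref{maintheorem}. For necessity, if $\mathbf{h}(\mathbf{x}) = T^{-1}\mathbf{x}$ conjugates the two flows then differentiating the conjugacy relation at $t=0$ forces $\tilde{\mathbf{f}}(\mathbf{y}) = T^{-1}\mathbf{f}(T\mathbf{y})$ on $\mathbb{R}_{>0}^m$. Grouping terms in this identity by reactant complex $\mathcal{C}^0$ and factoring out $(T\mathbf{y})^{\mathbf{z}^0} = \left(\prod_j c_j^{z_j^0}\right)\mathbf{y}^{\mathbf{z}^0}$, the linear independence of the distinct monomials $\mathbf{y}^{\mathbf{z}^0}$ over $\mathbb{R}_{>0}^m$ forces an equality of coefficient vectors for each $\mathcal{C}^0$; setting $b_i = \tilde{k}_i/\prod_j c_j^{\tilde{z}_{ij}}$ (invertibly, by (\ref{newrateconstants})) recovers exactly the equations (\ref{condition}).

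The next step is a ``separation by reactant complex'' observation: the equations in (\ref{condition}) are indexed by $\mathcal{C}^0 \in \mathcal{C}_{react}$, and the rate constants $k_i$ (resp.\ $b_i$) appearing in the $\mathcal{C}^0$-equation involve only reactions issuing from $\mathcal{C}^0$; these sub-families are independent across distinct reactant complexes. As $\mathbf{k}$ varies over $\mathbb{R}_{>0}^r$, the left-hand side of (\ref{condition}) at $\mathcal{C}^0$ traces out precisely $C_{\mathcal{R}}(\mathcal{C}^0)$; similarly, as $b$ varies, the inner sum on the right traces out $C_{\mathcal{R}'}(\mathcal{C}^0)$, so that $T$ times this sum traces out $T \cdot C_{\mathcal{R}'}(\mathcal{C}^0)$.

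With these ingredients the two implications are short. For $(\Leftarrow)$, given any $\mathbf{k}$, each left-hand side vector of (\ref{condition}) lies in $C_{\mathcal{R}}(\mathcal{C}^0) \subseteq T \cdot C_{\mathcal{R}'}(\mathcal{C}^0)$, so the required constants $b_i > 0$ exist for each $\mathcal{C}^0$ separately and Theorem \ref{maintheorem} delivers the conjugacy-inducing $\mathbf{k}'$ via (\ref{newrateconstants}). For $(\Rightarrow)$, fix $\mathcal{C}^0 \in \mathcal{C}_{react}$ and any $v \in C_{\mathcal{R}}(\mathcal{C}^0)$; choose $\mathbf{k}$ so that the left-hand side of (\ref{condition}) at $\mathcal{C}^0$ equals $v$ (with arbitrary positive values elsewhere). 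By hypothesis there is a conjugating $\mathbf{k}'$, and the necessity half of Theorem \ref{maintheorem} gives $b_i > 0$ with right-hand side equal to the left-hand side, placing $v \in T \cdot C_{\mathcal{R}'}(\mathcal{C}^0)$.

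The only real obstacle is a bookkeeping point flagged in the paragraph following Theorem \ref{maintheorem}: when $\mathcal{C}^0$ appears in $\mathcal{C}_{react}$ only through one of the two networks, the cone on the other side should be read as $\{\mathbf{0}\}$, and one must check that the inclusion $C_{\mathcal{R}}(\mathcal{C}^0) \subseteq T \cdot C_{\mathcal{R}'}(\mathcal{C}^0)$ in this degenerate case correctly encodes the compatibility requirement that the origin lie in the relevant cone. Once this convention is made explicit, the two implications above cover the degenerate cases without change.
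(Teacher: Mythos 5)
Your proposal is correct and follows essentially the same route as the paper: sufficiency via Theorem \ref{maintheorem}, and necessity by differentiating the conjugacy relation, matching coefficients of the monomials $\mathbf{y}^{\mathbf{z}^0}$ reactant complex by reactant complex to recover (\ref{condition}), and then translating the resulting equalities into the cone inclusion. Your treatment of the quantifier over $\mathbf{k}$ (realizing an arbitrary $v \in C_{\mathcal{R}}(\mathcal{C}^0)$ by a suitable choice of rate constants) and of the empty-sum convention is in fact somewhat more explicit than the paper's, which compresses these steps into a single remark.
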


\begin{proof}
The forward implications follow directly from Theorem \ref{maintheorem}.

To prove the `only if' portions of the results, notice that the assumption of conjugacy with $\mathbf{h}(\mathbf{x})=T^{-1}\mathbf{x}$ implies that
\begin{equation}
\label{9999}
\Psi'(\mathbf{y}_0,t)=\sum_{\mathcal{C}^0 \in \mathcal{C}_{react}} \mathop{\sum_{i=1}^r}_{\mathcal{C}_i = \mathcal{C}^0} T^{-1} \left(k_i \prod_{j=1}^m c_j^{z_{ij}} \right) (\mathbf{z}_i'-\mathbf{z}_i) \; \Psi(\mathbf{y}_0,t)^{\mathbf{z}_0}
\end{equation}
while we have
\begin{equation}
\label{99999}
\Psi'(\mathbf{y}_0,t)=\sum_{\mathcal{C}^0 \in \mathcal{C}_{react}} \mathop{\sum_{i=1}^{\tilde{r}}}_{\mathcal{C}_i = \mathcal{C}^0} \tilde{k}_i (\tilde{\mathbf{z}}_i'-\tilde{\mathbf{z}}_i) \; \Psi(\mathbf{y}_0,t)^{\mathbf{z}_0}
\end{equation}
from (\ref{de}). In order to have equality between (\ref{9999}) and (\ref{99999}) we require that
\[\mathop{\sum_{i=1}^r}_{\mathcal{C}_i = \mathcal{C}^0} \left( k_i \prod_{j=1}^r c_j^{z_{ij}} \right) (\mathbf{z}_i'-\mathbf{z}_i) = T \mathop{\sum_{i=1}^{\tilde{r}}}_{\mathcal{C}_i = \mathcal{C}^0} \tilde{k}_i (\tilde{\mathbf{z}}_i' - \tilde{\mathbf{z}}_i)\]
for every $\mathcal{C}^0 \in \mathcal{C}_{react}$. The desired cone conditions follow immediately from the conditions on the rate constants vectors $\mathbf{k} \in \mathbb{R}_{>0}^r$ and $\mathbf{k}' \in \mathbb{R}_{>0}^{\tilde{r}}$.
\end{proof}

\subsection{Weakly Reversible Networks}
\label{weaklyreversiblenetworkssection}

In Section \ref{conjugatedynamicalsystemssection}, the results depended on having two given networks $\mathcal{N}$ and $\mathcal{N}'$ to compare. In standard practice, however, we have only a single network $\mathcal{N}$ whose dynamics are unknown and we need to find the target network $\mathcal{N}'$ whose dynamics are understood.

In this section, we will consider a particularly broad and well-understood class of such target networks in weakly reversible networks. Since it is known  that weakly reversible systems are complex balanced for at least some values of the rate constants, and therefore exhibit locally stable dynamics for those rate constants values, it is a reasonable starting point when considering a network $\mathcal{N}$ which seems to exhibit locally stable dynamics to search for weakly reversible target networks $\mathcal{N}'$ to which it could be conjugate.

In practice, however, there are many sensitivities which can arise in choosing a suitable target network $\mathcal{N}'$ which is weakly reversible. We will illustrate the applicability, and limitations, of Theorem \ref{maintheorem} to such cases through four examples. The first is an example where linear conjugacy to a weakly reversible network which exhibits locally stable dynamics can be universally shown. The second is an example where linear conjugacy to a weakly reversible network can only be shown for certain choices of the rate vector $\mathbf{k} \in \mathbb{R}_{>0}^r$. This is also an example where $S^*$ and $S$ do not always coincide for the original network $\mathcal{N}$. The third is an example where linear conjugacy to a weakly reversible network holds universally but conditions on $\mathbf{k} \in \mathbb{R}_{>0}^r$ are still required to guarantee locally stable dynamics. This example also demonstrates how these conditions can be reduced by creatively ``splitting'' a reaction in the target network $\mathcal{N}'$. The fourth is an example where a ``phantom'' reactant complex is required to demonstrate linear conjugacy to a weakly reversible network.

Our general technique in this section will be to search for weakly reversible target networks $\mathcal{N}'$ which involve the same reactant complexes as the original network $\mathcal{N}$.\\

\textbf{Example 1:} Consider the chemical reaction network $\mathcal{N}$ given by
\[\mathcal{N}: \hspace{0.3in} \begin{array}{c} \displaystyle{\mathcal{A}_1 + 2 \mathcal{A}_2 \; \stackrel{k_1}{\longrightarrow} \; \mathcal{A}_1 + 3 \mathcal{A}_2} \; \stackrel{k_2}{\longrightarrow} \; \mathcal{A}_1 + \mathcal{A}_2 \; \stackrel{k_3}{\longrightarrow} 3 \mathcal{A}_1 \\ \displaystyle{2 \mathcal{A}_1 \; \stackrel{k_4}{\longrightarrow} \; \mathcal{A}_2.} \end{array}\]

We want to find a weakly reversible target network $\mathcal{N}'$ involving the reactant complex set of the original network $\mathcal{N}$, which is
\[\left\{ \mathcal{A}_1 + 2 \mathcal{A}_2, \mathcal{A}_1 + 3 \mathcal{A}_2, \mathcal{A}_1 + \mathcal{A}_2, 2 \mathcal{A}_1 \right\}.\]
Many such networks can be eliminated for failing to be weakly reversible, leaving a relatively small set of possibilities. One such possibility is given by
\[\mathcal{N}': \hspace{0.5in}
\begin{array}{c}
\displaystyle{\mathcal{A}_1 + 2 \mathcal{A}_2 \; \mathop{\stackrel{\tilde{k}_1}{\rightleftarrows}}_{\tilde{k}_2}} \; \mathcal{A}_1 + 3 \mathcal{A}_2 \\ \displaystyle{\mathcal{A}_1 + \mathcal{A}_2 \; \mathop{\stackrel{\tilde{k}_3}{\rightleftarrows}}_{\tilde{k}_4} \; 2\mathcal{A}_1.}
\end{array}
\hspace{0.5in}\]
In order for $\mathcal{N}$ and $\mathcal{N}'$ to be conjugate, we need to find $b_i > 0$, $c_j > 0$, $i = 1, \ldots , 4$, $j = 1, 2,$ such that
\[ \begin{split} k_1 \left[ \begin{array}{c} 0 \\ 1 \end{array} \right] & = b_1 \left[ \begin{array}{cc} c_1 & 0 \\ 0 & c_2 \end{array} \right] \left[ \begin{array}{c} 0 \\ 1 \end{array} \right] \\ k_2 \left[ \begin{array}{c} 0 \\ -2 \end{array} \right] & = b_2 \left[ \begin{array}{cc} c_1 & 0 \\ 0 & c_2 \end{array} \right] \left[ \begin{array}{c} 0 \\ -1 \end{array} \right] \\ k_3 \left[ \begin{array}{c} 2 \\ -1 \end{array} \right] & = b_3 \left[ \begin{array}{cc} c_1 & 0 \\ 0 & c_2 \end{array} \right] \left[ \begin{array}{c} 1 \\ -1 \end{array} \right] \\ k_4 \left[ \begin{array}{c} -2 \\ 1 \end{array} \right] & = b_4 \left[ \begin{array}{cc} c_1 & 0 \\ 0 & c_2 \end{array} \right] \left[ \begin{array}{c} -1 \\ 1 \end{array} \right]. \end{split}\]
It can be easily verified that $b_1=k_1$, $b_2=2k_2$, $b_3=k_3$, $b_4=k_4$, $c_1=2$, $c_2=1$ works.

It follows from (\ref{newrateconstants}) that $\mathcal{N}$ is linearly conjugate to $\mathcal{N}'$ with rate constants given by $\tilde{k}_1 = 2k_1$, $\tilde{k}_2 = 4k_2$, $\tilde{k}_3 = 2k_3$ and $\tilde{k}_4 = 4k_4$. It is known that $\mathcal{N}'$ is complex balanced, and therefore exhibits locally stable dynamics, for every rate constant vector $\mathbf{k}' \in \mathbb{R}_{>0}^4$. It follows that the original network $\mathcal{N}$ exhibits locally stable dynamics for every rate constant vector $\mathbf{k} \in \mathbb{R}_{>0}^4$ by Lemma \ref{lemma112}.

It could also be noted that $C_{\mathcal{R}}(\mathcal{C}^0)=T \; C_{\mathcal{R}'}(\mathcal{C}^0)$ for every $\mathcal{C}^0 \in \mathcal{C}_{react}$ so that $\mathcal{N}$ and $\mathcal{N}'$ satisfy the hypotheses of Corollary \ref{corollary2} and therefore linear conjugacy holds unconditionally.\\

\textbf{Example 2:} Consider the chemical reaction network $\mathcal{N}$ given by
\[\mathcal{N}: \hspace{0.5in} \mathcal{A}_2 \; \stackrel{k_1}{\longleftarrow} \; \mathcal{A}_1 \; \stackrel{k_2}{\longleftarrow} \; 2 \mathcal{A}_2 \; \stackrel{k_3}{\longrightarrow}3\mathcal{A}_1. \]

The only weakly reversible target network involving the same reactant complex set as $\mathcal{N}$ is
\[\mathcal{N}': \hspace{0.5in} \mathcal{A}_1 \; \mathop{\stackrel{\tilde{k}_1}{\rightleftarrows}}_{\tilde{k}_2} \; 2 \mathcal{A}_2.\]
In order to satisfy (\ref{condition}), we need to find $b_1>0$, $b_2>0$, $c_1>0$, and $c_2>0$ such that
\begin{equation}
\label{25}
\begin{split} & k_1 \left[ \begin{array}{c} -1 \\ 1 \end{array} \right] = b_1 \left[ \begin{array}{cc} c_1 & 0 \\ 0 & c_2 \end{array} \right] \left[ \begin{array}{c} -1 \\ 2 \end{array} \right] \\ & k_2 \left[ \begin{array}{c} 1 \\ -2 \end{array} \right] + k_3 \left[ \begin{array}{c} 3 \\ -2 \end{array} \right] = b_2 \left[ \begin{array}{cc} c_1 & 0 \\ 0 & c_2 \end{array} \right] \left[ \begin{array}{c} 1 \\ -2 \end{array} \right]\end{split}
\end{equation}
while satisfying (\ref{newrateconstants}) requires
\begin{equation}
\label{26}
\tilde{k}_1 = b_1 c_1, \hspace{0.5in} \mbox{and} \hspace{0.5in} \tilde{k}_2 = b_2 c_2^2.
\end{equation}

The system (\ref{25}) corresponds to satisfying $k_1=b_1c_1$, $k_1=2b_1c_2$, $k_2 + 3k_3=b_2c_1$, and $k_2+k_3=b_2c_2$. From the first two equations, we have $c_1/c_2=2$ while from the last two we have $c_1/c_2 = (k_2+3k_3)/(k_2+k_3)$, which implies (\ref{25}) can be satisfied if and only if $k_2=k_3$. With this restriction, the system can be satisfied for $b_1=k_1$, $b_2=2k_2=2k_3$, $c_1=2$, $c_2=1$. It follows from (\ref{26}) that $\tilde{k}_1=2k_1$ and $\tilde{k}_2=2k_2=2k_3$.

It is known that $\mathcal{N}'$ is complex balanced, and therefore exhibits locally stable dynamics, for all values of $\tilde{k}_1>0$ and $\tilde{k}_2>0$; however, because we required a condition on the rate constants of $\mathcal{N}$ in order for condition (\ref{25}) to be satisfied, $\mathcal{N}$ does not exhibit locally stable dynamics unconditionally. In fact, it exhibits locally stable dynamics only for $k_2=k_3$. For $k_2>k_3$, all trajectories tend to the origin, while for $k_3>k_2$ all trajectories become unbounded.

It is worth noting that the kinetic subspace $S^*$ is two-dimensional for $\mathcal{N}$ for all rate constants values except $k_2=k_3$ when it collapses to a single dimension and we have the strict inclusion $S^* \subset S$. Since $\mathcal{N}'$ is weakly reversible, we always have $S^*=S$ for $\mathcal{N}'$ by Lemma \ref{kineticstoichiometric} and we notice that this is always one-dimensional. The systems will only be conjugate when the dimensions of the kinetics compatibility classes match, which only occurs when $k_2=k_3$.

It could also be noted that $C_{\mathcal{R}}(\mathcal{C}^0) \cap [T \; C_{\mathcal{R}'}(\mathcal{C}^0)] \not= \emptyset$ for every $\mathcal{C}^0 \in \mathcal{C}_{react}$ but $C_{\mathcal{R}}(\mathcal{C}^0) \not\subseteq [T \; C_{\mathcal{R}'}(\mathcal{C}^0)]$ for $\mathcal{C}^0 = 2 \mathcal{A}_2 \in \mathcal{C}_{react}$. Consequently, the networks $\mathcal{N}$ and $\mathcal{N}'$ satisfy the hypotheses of Corollary \ref{corollary1} but not Corollary \ref{corollary2}; linear conjugacy with $\mathbf{h}(\mathbf{x})=T^{-1}\mathbf{x}$ cannot therefore be guaranteed for all rate constant vectors $\mathbf{k} \in \mathbb{R}_{>0}^3$.\\

\textbf{Example 3:} Consider the chemical reaction network $\mathcal{N}$ given by
\[\begin{split} \mathcal{A}_1 + 2\mathcal{A}_2 \; & \stackrel{\epsilon}{\longrightarrow} \; \mathcal{A}_1 \\ \mathcal{N}: \hspace{0.5in} 2 \mathcal{A}_1 + \mathcal{A}_2 \; & \stackrel{1}{\longrightarrow} \; 3 \mathcal{A}_2 \\ \mathcal{A}_1 + 3 \mathcal{A}_2 \; & \stackrel{1}{\longrightarrow} \; \mathcal{A}_1 + \mathcal{A}_2 \; \stackrel{1}{\longrightarrow} \; 3 \mathcal{A}_1 + \mathcal{A}_2 \hspace{0.3in} \end{split}\]
for $\epsilon > 0$.

We search for target networks $\mathcal{N}'$ with reactions flowing between the complexes in the reactant set of $\mathcal{N}$, which is
\[\left\{ \mathcal{A}_1 + 2\mathcal{A}_2, 2 \mathcal{A}_1 + \mathcal{A}_2, \mathcal{A}_1 + 3 \mathcal{A}_2, \mathcal{A}_1 + \mathcal{A}_2 \right\}.\]
Many such networks can be eliminated for failing to be weakly reversible, leaving a relatively small set of possibilities. We will choose the network $\mathcal{N}'$ given by
\[\mathcal{N}': \hspace{0.5in}
\begin{array}{c}
\displaystyle{\mathcal{A}_1 + 2 \mathcal{A}_2 \; \stackrel{\tilde{k}_1}{\longrightarrow} \; \mathcal{A}_1 + \mathcal{A}_2} \\ \displaystyle{{}^{\tilde{k}_4} \uparrow \; \; \; \; \; {}^{\tilde{k}_5} \nearrow \; \; \; \; \; \; \; \; \; \downarrow_{\tilde{k}_2}} \\ \displaystyle{\mathcal{A}_1 + 3 \mathcal{A}_2 \; \mathop{\longleftarrow}_{\tilde{k}_3} \; 2\mathcal{A}_1 + \mathcal{A}_2}
\end{array}
\hspace{0.5in}\]
where we have chosen to ``split'' the reaction flowing from the reactant complex $\mathcal{A}_1 + 3 \mathcal{A}_2$ into two weighted reactions. The utility of this technique will become apparent momentarily.

In order to satisfy (\ref{condition}) we need to find constants $b_i>0$, $c_j>0$, $i=1, \ldots, 5$, $j=1,2$, such that
\[\begin{split} \epsilon \left[ \begin{array}{c} 0 \\ -2 \end{array} \right] & = b_1 \left[ \begin{array}{cc} c_1 & 0 \\ 0 & c_2 \end{array} \right] \left[ \begin{array}{c} 0 \\ -1 \end{array} \right] \\ \left[ \begin{array}{c} 2 \\ 0 \end{array} \right] & = b_2 \left[ \begin{array}{cc} c_1 & 0 \\ 0 & c_2 \end{array} \right] \left[ \begin{array}{c} 1 \\ 0 \end{array} \right] \\ \left[ \begin{array}{c} -2 \\ 2 \end{array} \right] & = b_3 \left[ \begin{array}{cc} c_1 & 0 \\ 0 & c_2 \end{array} \right] \left[ \begin{array}{c} -1 \\ 2 \end{array} \right] \\ \left[ \begin{array}{c} 0 \\ -2 \end{array} \right] & = \left[ \begin{array}{cc} c_1 & 0 \\ 0 & c_2 \end{array} \right] \left( b_4 \left[ \begin{array}{c} 0 \\ -1 \end{array} \right] + b_5 \left[ \begin{array}{c} 0 \\ -2 \end{array} \right] \right).\end{split}\]
We will choose the solution set $b_1=2\epsilon$, $b_2=b_3=1$, $b_4=2(1-t)$, $b_5=t$, $c_1=2$, $c_2=1$ where $0 \leq t < 1$ is a weighting constant. This gives rise to the rate constants $\tilde{k}_1 = 4 \epsilon$, $\tilde{k}_2 = 2$, $\tilde{k}_3 = 4$, $\tilde{k}_4 = 4(1-t)$ and $\tilde{k}_5 = 2t$ according to (\ref{newrateconstants}).

There is one condition on the rate constants of $\mathcal{N}'$ in order for the mass-action system to be complex balanced. That condition is
\[\epsilon = \frac{1-t}{\sqrt{2-t}}.\]
Each $0 \leq t < 1$ corresponds to specific network $\mathcal{N}'$ which is conjugate to $\mathcal{N}$. Consequently, we can guarantee that $\mathcal{N}$ is conjugate to a complex balanced system, and therefore exhibits locally stable dynamics, for the range of values $0 < \epsilon \leq 1/\sqrt{2}$. Notice that if we had not split the reaction flowing from the complex $\mathcal{A}_1 + 3 \mathcal{A}_2$ and instead had all of the weight represented in $\tilde{k}_4$, we would have only been able to show that $\mathcal{N}$ exhibits locally stable dynamics for $\epsilon = 1/\sqrt{2}$.

It can be checked empirically that locally stable dynamics appears to be exhibited for $\mathcal{N}$ for all values $\epsilon > 0$. There is a unique positive equilibrium concentration given by $(x_1^*,x_2^*)=(1,-\epsilon/2+\sqrt{(\epsilon/2)^2+1})$ which is locally asymptotically stable for all $\epsilon > 0$ according to standard linearization theory. It is not our claim, therefore, that this theory represents a complete classification of locally stable dynamics, even for networks which exhibit locally stable dynamics for some values of the rate constants.\\

\textbf{Example 4:} Consider the chemical reaction network $\mathcal{N}$ given by
\[\mathcal{A}_1 \; \stackrel{k_1}{\longrightarrow} \; 2\mathcal{A}_1 + 2\mathcal{A}_2 \; \stackrel{k_2}{\longrightarrow} \; \mathcal{A}_2 \; \stackrel{k_3}{\longrightarrow} \; \mathcal{A}_1 + \mathcal{A}_2.\]

A quick analysis (\ref{de}) reveals that $\mathcal{N}$ appears to exhibit locally stable dynamics which suggests that the network may be linearly conjugate to a complex balanced system. It can be quickly seen, however, that there is no weakly reversible network $\mathcal{N}'$ involving the same reactant complexes as $\mathcal{N}$ which satisfies the requirements of Theorem \ref{maintheorem}.

Theorem \ref{maintheorem}, however, does not require the target network $\mathcal{N}'$ to use the same reactant complexes as $\mathcal{N}$. We can have a reactant complex $\mathcal{C}^0$ from the reactant complex set of $\mathcal{C}'$ which is not in the reactant complex set of $\mathcal{C}$ so long as
\[\mathop{\sum_{i = 1}^{\tilde{r}}}_{\tilde{\mathcal{C}}_i = \mathcal{C}^0} b_i(\tilde{\mathbf{z}}_i' - \tilde{\mathbf{z}}_i) = \mathbf{0}.\]
One possible target network $\mathcal{N}'$ which makes use of such a ``phantom'' reactant complex is given by
\[\mathcal{N}': \hspace{0.5in}
\begin{array}{l}
\displaystyle{\mathcal{A}_1 + \mathcal{A}_2 \; \; \; \mathop{\stackrel{\tilde{k}_3}{\leftrightarrows}}_{\tilde{k}_6} \; \; \; \mathcal{A}_2} \\ \; \; \displaystyle{{}^{\tilde{k}_4} \downarrow \; \; \; {}^{\tilde{k}_5} \searrow \; \; \; \; \; \; \uparrow^{\tilde{k}_2}} \\ \; \; \; \;\displaystyle{\mathcal{A}_1 \; \mathop{\longrightarrow}_{\tilde{k}_1} \; 2\mathcal{A}_1 + 2\mathcal{A}_2.}
\end{array}
\hspace{0.5in}\]
Since the reactions corresponding to $\tilde{k}_1$, $\tilde{k}_2$ and $\tilde{k}_3$ in $\mathcal{N}$ are the same as those for $k_1$, $k_2$, and $k_3$ in $\mathcal{N}$, we set $c_1=c_2=1$, $b_1=k_1$, $b_2=k_2$, and $b_3=k_3$. Since the complex $\mathcal{A}_1 + \mathcal{A}_2$ is not a reactant complex in $\mathcal{C}$, in order to satisfy (\ref{condition}) it is required that
\[\tilde{k}_4 \left[ \begin{array}{c} 0 \\ -1 \end{array} \right] + \tilde{k}_5 \left[ \begin{array}{c} 1 \\ 1 \end{array} \right] + \tilde{k}_6 \left[ \begin{array}{c} -1 \\ 0 \end{array} \right] = \left[ \begin{array}{c} 0 \\ 0 \end{array} \right].\]
This can be satisfied if $\tilde{k}_4=\tilde{k}_5=\tilde{k}_6=t$ for any $t > 0$.

There is one condition on the rate constants of $\mathcal{N}'$ required for the network to be complex balanced. That condition in terms of the rate constants of $\mathcal{N}$ is
\[6t^3 = k_1k_2k_3.\]
Since every value of $t > 0$ corresponds to a valid conjugate network $\mathcal{N}'$, this is no restriction at all. It follows that the original network $\mathcal{N}$ is linearly conjugate to a complex balanced system for all choices of rate constants and therefore universally exhibits locally stable dynamics by Lemma \ref{lemma112}.

In other words, we are able to demonstrate the network $\mathcal{N}$ is linearly conjugate to a complex balanced network $\mathcal{N}'$, and therefore possesses locally stable dynamics, by adding a ``phantom'' reactant complex which contributes no dynamical information to the mass-action kinetics (\ref{de}). It should be noted, however, that in order for the target network $\mathcal{N}'$ satisfying (\ref{condition}) to be weakly reversible it is necessary that any such phantom reactant complex at least appear in the set of product reactants of $\mathcal{N}$.

\section{Conclusion}
\label{conclusionsection}

In this paper we have summarized what are, to the best of our knowledge, the most comprehensive results on the topic of chemical reaction networks with qualitatively equivalent dynamics under the assumption of mass-action kinetics. We have attempted to unify these results into a common theory which we have called conjugate chemical reaction network theory. We have presented a result (Theorem \ref{maintheorem}) which goes beyond the results currently in the literature and shown in Section \ref{weaklyreversiblenetworkssection} how this result can be used to expand the scope of weakly reversible networks theory.

There are several immediate avenues for future work in the field of conjugate chemical reaction networks.
\begin{enumerate}

\item
It is an open question the extent to which \emph{non-linear} mappings $\mathbf{h}: \mathbb{R}_{>0}^m \mapsto \mathbb{R}_{>0}^m$ can offer insight into the behaviour of chemical reaction networks. In order to preserve mass-action dynamics, it is clear that we should select $\mathbf{h}(\mathbf{x})$ from the class of polynomial transformations. The algebra, however, is non-trivial except for simple cases, and it is known that non-linear mappings need not preserve the form of the reactant complexes or all aspects of the qualitative behaviour of the mass-action system (see Appendix).

\item
The primary focus of Section \ref{weaklyreversiblenetworkssection} was determining when a reaction network $\mathcal{N}$ could be shown to be conjugate to a weakly reversible network $\mathcal{N}'$. For all but the smallest examples, however, it is typically not feasible to check all of the conditions required for conjugacy over the set of possible weakly reversible target networks by hand. The development of algorithms and computer software which can efficiently check for viable weakly reversible target networks $\mathcal{N}'$, and which makes full use of the techniques of ``splitting'' reactions and using ``phantom'' reactions, is a primary interest.

We note that Szederk\'{e}nyi and Hangos have presented some computer algorithms for the related problem of determining when a network has mass-action kinetics (\ref{de}) equivalent to that of a complex balanced system for a specified set of rate constants \cite{Sz-H}.

\item
We have assumed throughout this paper that a chemical reaction network $\mathcal{N}$ obeys the mass-action kinetics (\ref{de}), which corresponds to
\[\frac{d\mathbf{x}}{dt} = \sum_{i=1}^r (\mathbf{z}_i'-\mathbf{z}_i) \; R_i(\mathbf{x})\]
with $R_i(\mathbf{x}) = k_i \prod_{j=1}^m x_j^{z_{ij}}$. Within the literature, and in particular within the systems biology literature, one often encounters other kinetics. A popular choice for an enzyme-catalysed single-substrate reaction is the Michaelis-Menten kinetics
\[R_i(\mathbf{x}) = \frac{V_{max} x}{K_M + x}\]
where $x$ is the concentration of the substrate. Another popular choice is a Hill function \cite{Hi}. Some authors consider general properties on the $R_i(\mathbf{x})$, such as convexity, strict monotonicity with respect to reactant concentrations, etc. \cite{A3,S}. It remains an open question as to how conjugacy can be applied to these alternate kinetic schemes and which aspects of qualitative dynamics are preserved.
\end{enumerate}

\begin{center}
\textbf{\large Appendix}
\end{center}
\normalsize

Comparing the qualitative behaviour of two conjugate mass-action systems is more challenging when the diffeomorphism $\mathbf{h}: \mathbb{R}_{>0}^m \mapsto \mathbb{R}_{>0}^m$ is non-linear. In general, many qualitative properties are not preserved by non-linear mappings.

For example, consider the mass-action networks $\mathcal{N}$ and $\mathcal{N}'$ given by
\[\mathcal{N}: \hspace{0.5in} \mathcal{A} \; \stackrel{k}{\longrightarrow} \; \mathcal{O}\]
and
\[\mathcal{N}': \hspace{0.5in} \mathcal{A} \; \stackrel{k}{\longrightarrow} \; 2\mathcal{A}.\]
Solutions of (\ref{de}) associated with $\mathcal{N}$ are given by $x(t)=x_0e^{-kt}$ while solutions of (\ref{de}) for $\mathcal{N}'$ are given by $y(t)=y_0e^{kt}$ which can be mapped into one another via the non-linear mapping $y(t)=1/x(t)$. It is clear the qualitative behaviour of $\mathcal{N}$ is not preserved through the conjugacy mapping.

More complicated disparities in behaviour can occur in higher dimensions. For instance, the reaction network $\mathcal{N}$ given by
\[\mathcal{N}: \hspace{0.5in} \mathcal{A}_1 \; \stackrel{k}{\longrightarrow} \; \mathcal{A}_2\]
is conjugate to the reaction network $\mathcal{N}'$ given by
\[\mathcal{N}': \hspace{0.5in} \begin{array}{rcl} \displaystyle{\mathcal{A}_1} & \displaystyle{\; \stackrel{2k}{\longrightarrow} \;} & \displaystyle{\mathcal{O}} \\ \displaystyle{\mathcal{A}_2} & \displaystyle{\; \stackrel{k}{\longrightarrow} \;} & \displaystyle{2\mathcal{A}_2} \\ \displaystyle{2\mathcal{A}_1 + \mathcal{A}_2} & \displaystyle{\; \stackrel{3k}{\longrightarrow} \;} & \displaystyle{\mathcal{A}_1 + \mathcal{A}_2} \\ \displaystyle{\mathcal{A}_1 + 2\mathcal{A}_2} & \displaystyle{\; \stackrel{2k}{\longrightarrow} \;} & \displaystyle{\mathcal{A}_1 + 3\mathcal{A}_2} \end{array}\]
via the mapping
\[\begin{split} y_1 & = x_1^2 \; x_2^{-3} \\ y_2 & = x_1^{-1} \; x_2^2.\end{split}\]
In this case, we have the limiting behaviour $x_1(t) \to 0$ and $x_2(t) \to x_1(0)+x_2(0)$ for $\mathcal{N}$ but $y_1(t) \to 0$ and $y_2(t) \to \infty$ for $\mathcal{N}'$. The qualitative behaviour is significantly different.

There are still examples, however, for which non-linear conjugacies are capable of demonstrating qualitative equivalence of solutions. For example, consider the network
\[\mathcal{N}: \hspace{0.5in} \mathcal{A} \; \mathop{\stackrel{k_1}{\rightleftarrows}}_{k_2} \; 2\mathcal{A}.\]
It is easy to see that $\mathcal{N}$ is conjugate to the reaction networks $\mathcal{N}_b$ given by
\[\mathcal{N}_b: \hspace{0.5in} \mathcal{A} \; \mathop{\stackrel{k_1/b}{\rightleftarrows}}_{k_2/b} \; (1+b)\mathcal{A}\]
for $b \in \mathbb{Z}_{>0}$ via the diffeomorphism $y(t)=x(t)^{1/b}$ where $y(t)$ is the solution of the mass-action system (\ref{de}) for $\mathcal{N}_b$. The set of reactant complexes changes in all cases except $b=1$; however, the qualitative behaviour of the systems are identical.

\end{document}